\newcommand\F{{\mathbf F}}
\newcommand\regregsat{\mathrm{rrsat}}
\newcommand\overregsat{\mathrm{orsat}}
\newcommand\regsat{\mathrm{rsat}}
\theoremstyle{plain}
\newtheorem{theorem}{Theorem}[section]
\newtheorem{lemma}[theorem]{Lemma}
\newtheorem{construction}[theorem]{Construction}
\newtheorem{proposition}[theorem]{Proposition}
\theoremstyle{definition}
\newtheorem{defn}[theorem]{Definition}
\newtheorem{claim}[theorem]{Claim}
\theoremstyle{plain}
\newtheorem*{rep@theorem}{\rep@title}
\newcommand{\newreptheorem}[2]{%
\newenvironment{rep#1}[1]{%
 \def\rep@title{#2 \ref{##1}}%
 \begin{rep@theorem}}%
 {\end{rep@theorem}}}
\newcommand\cref[1]{Corollary~\ref{cor:#1}}
\title{Saturation problems with regularity constraints}
\author{D\'aniel Gerbner$^1$ \ \ Bal\'azs Patk\'os$^{1,2}$ \ \ Zsolt Tuza$^{1,3}$ \ \ M\'at\'e Vizer$^1$ \\ \small $^1$ Alfr\'ed R\'enyi Institute of Mathematics\\ \small $^2$ Moscow Institute of Physics and Technology\\ \small $^3$ Department of Computer Science and Systems
 Technology, University of Pannonia%, Hungarian Academy of Sciences
}
\date{}
\begin{document}

\maketitle

\begin{abstract}
    For a graph $F$, we say that another graph $G$ is $F$-saturated, if $G$ is $F$-free and adding any edge to $G$ would create a copy of $F$. We study for a given graph $F$ and integer $n$ whether there exists a regular $n$-vertex $F$-saturated graph, and if it does, what is the smallest number of edges of such a graph. We mainly focus on the case when $F$ is a complete graph and prove for example that there exists a $K_3$-saturated regular graph on $n$ vertices for every large enough $n$. 
    
    We also study two relaxed versions of the problem: when we only require that no regular $F$-free supergraph of $G$ should exist or when we drop the $F$-free condition and only require that any newly added edge should create a new copy of $F$.
\end{abstract}

\section{Introduction}

Extremal graph theory often deals with finding the largest or smallest number of edges in $n$-vertex graphs satisfying some specified properties. The main example is
Tur\'an theory, where we look for the largest number of edges in $n$-vertex $F$-free graphs for some fixed graph~$F$.

A natural counterpart is saturation, where we look for the smallest number of edges in $n$-vertex $F$-saturated graphs. A graph $G$ is called \textit{$F$-saturated} if it is $F$-free, but adding any edge to $G$ creates a copy of $F$. For a survey on graph saturation problems, see \cite{ffs}. We mention only one result: K\'aszonyi and Tuza \cite{katu} showed that for any $F$ (in fact, for any family of graphs) the smallest number of edges in an $F$-saturated graph is at most linear in the number of vertices.

Recently, a variant of Tur\'an problems have attracted attention \cite{cvk,ct,gptv1,gptv2,tt}, where one looks for the largest number of edges in $F$-free regular graphs. Here we initiate the study of regular saturation problems. Observe that there are several different quantities one might study: the smallest number of edges in a regular $F$-saturated graph on $n$ vertices, the smallest number of edges in a regular $F$-free graph $G$ such that any regular supergraph of $G$ contains $F$ or the smallest number of edges in a regular not necessarily $F$-free graph on $n$ vertices such that any newly added edge creates a new copy of $F$. Here we study all these variants.

\begin{defn}

Let $rsat(n,F)$ denote the smallest number of edges in a regular $n$-vertex $F$-saturated  graph, if such a graph exists.

Let $rrsat(n,F)$ denote the smallest number of edges in a regular $n$-vertex $F$-free graph $G$ such that any regular $n$-vertex graph containing $G$ contains a copy of $F$. 

Let $orsat(n,F)$ denote the smallest number of edges in a regular $n$-vertex  graph $G$ such that for any non-edge $e\notin E(G)$, the graph $G'$ with $V(G')=V(G)$ and $E(G')=E(G)\cup \{e\}$ contains a copy of $F$ with $e\in E(F)$. Such graphs $G$ will be called $F$-\textit{oversaturated}.
\end{defn}

Observe that if $S_r$ denotes the star with $r$ leaves, then clearly $\regsat(2k+1,S_{2r})$ does not exist. Hence in case of $\regsat(n,F)$, the primary question whether it exists or not.

\begin{theorem}\label{K3odd}
There exists an $n_0$ such that $\regsat(n,K_3)$ exists for every $n\ge n_0$. 
\end{theorem}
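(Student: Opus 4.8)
The plan is to reformulate the condition and then give explicit constructions, treating the two parities of $n$ separately. A graph $G$ is $K_3$-saturated precisely when it is triangle-free and every pair of non-adjacent vertices has a common neighbour. I would first record the constraints that any regular $K_3$-saturated graph must satisfy, since they explain the threshold $n_0$ and guide the construction. Saturation forces every non-neighbour of a vertex $v$ to lie at distance exactly $2$ from $v$, so if $G$ is $d$-regular then $n-1\le d+d(d-1)=d^2$. On the other hand, a $d$-regular triangle-free graph has $nd/2$ edges, so Tur\'an's bound gives $d\le n/2$. Finally, for odd $n$ the degree $d$ must be even, and a $d$-regular triangle-free graph on an odd number of vertices cannot be bipartite (a regular bipartite graph has equal parts). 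For small $n$ these requirements are incompatible: for $n=7$ one would need an even $d$ with $d^2\ge 6$ and $d\le 7/2$, which is impossible. This is exactly why the statement only claims existence for $n\ge n_0$; for large $n$ the interval $[\sqrt{n-1},\,n/2]$ comfortably contains admissible even values of $d$.

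For even $n$ the construction is immediate: the balanced complete bipartite graph $K_{n/2,n/2}$ is $\tfrac n2$-regular and triangle-free, and any two vertices in the same part have every vertex of the other part as a common neighbour, so it is $K_3$-saturated. Hence $\regsat(n,K_3)$ exists for all even $n\ge 4$, and essentially the whole difficulty of the theorem lies in the odd case.

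For odd $n$ I would build a non-bipartite $d$-regular (with $d$ even) triangle-free saturated graph. The natural building blocks are the balanced blow-ups of $C_5$ (each vertex replaced by an independent set of size $m$, consecutive classes completely joined), which are $2m$-regular, triangle-free, and saturated because $C_5$ has diameter $2$; these settle $n=5m$, in particular odd multiples of $5$. The Andr\'asfai graphs, which are vertex-transitive, triangle-free and maximal triangle-free circulants on $3k-1$ vertices, similarly settle $n\equiv 2\pmod 3$. To cover every large odd $n$ I would interpolate between such pieces by a flexible construction: take a large balanced bipartite bulk together with a small non-bipartite gadget of $C_5$-blow-up type, growing it by adjoining further independent classes, each joined to two classes that are mutually non-adjacent in the current graph (which creates no triangle), chosen so that the class sizes can be tuned to a common even degree $d$ while the total number of vertices realises the prescribed residue, and so that every remaining non-adjacent pair gains a common neighbour.

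The main obstacle is precisely this odd case. The rigidity of the $C_5$-blow-up -- its regularity system $\sum_{j\sim i}a_j=d$ forces all five classes to have equal size -- means one cannot simply reweight a single base graph to reach arbitrary $n$, so the construction must genuinely combine pieces. The delicate point is verifying simultaneously that the glued graph stays triangle-free and that \emph{every} non-adjacent pair -- inside the bulk, inside the gadget, and across the two -- has a common neighbour, since the cross connections needed to supply common neighbours are exactly those that threaten to create triangles. Once a valid construction is exhibited for each residue class of odd $n$ for all sufficiently large $n$, one takes $n_0$ to be the maximum of the finitely many resulting thresholds, which completes the proof.
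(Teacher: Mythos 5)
Your preliminaries are all correct and agree with the paper: the reformulation of $K_3$-saturation as ``triangle-free of diameter two'', the necessary conditions $n-1\le d^2$, $d\le n/2$, $d$ even for odd $n$, non-bipartiteness for odd $n$, the $n=7$ obstruction, and the even case via $K_{n/2,n/2}$ (which is exactly the paper's even case). The genuine gap is the odd case, which is the entire content of the theorem: you never exhibit a construction covering all large odd $n$. Your concrete building blocks handle only two thin families --- balanced blow-ups $C_5[E_m]$ give odd order only for $n\equiv 5\pmod{10}$, and Andr\'asfai graphs of odd order only cover $n\equiv 5\pmod 6$ --- and, as you yourself correctly observe, the regularity system of a $C_5$ blow-up forces all five classes to be equal, so these examples cannot be perturbed to nearby values of $n$. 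Every remaining odd residue is delegated to an ``interpolation'' whose key step (adjoining independent classes joined to two mutually non-adjacent classes, with sizes tuned so that regularity, the prescribed residue of $n$, triangle-freeness, and the diameter-two condition hold simultaneously) is precisely the hard part; you flag it as delicate but do not resolve it, and it is not even clear the plan can work as stated. For instance, if the ``bulk'' is complete bipartite, any gadget vertex can attach to at most one side of it (edges to both sides close a triangle through a bulk edge), and then providing common neighbours between that vertex and the opposite side, while keeping all degrees equal, is a nontrivial constraint system that you never solve. As written, the proposal is a programme, not a proof.

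For comparison, the paper settles the odd case with explicit circulants: $V(G_n)=\{v_0,\dots,v_{n-1}\}$, with $v_iv_j$ an edge iff $i-j\pmod n$ lies in a connection set $A_n$, where $A_n$ is given explicitly in five cases according to $n\bmod{10}$ (e.g.\ $A_n=\{1,3,\dots,y\}\cup\{2y+2\}$ when $n=10k+1=5(y+1)+1$), each consisting of a block of odd differences plus a few large even differences; the routine verification of triangle-freeness and saturation is left to the reader, but the constructions themselves are fully specified for every large odd $n$. Since your Andr\'asfai examples are themselves circulants, the most natural repair of your argument is to follow this homogeneous route --- choose one symmetric connection set per odd residue class and verify the two properties directly --- rather than gluing inhomogeneous bipartite and non-bipartite pieces, where regularity and diameter two pull in opposite directions.
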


Note that for small $n$ the theorem above may not hold. For example, if $n=7$ then $\regsat(n,K_3)$ does not exist, as there is no $d$-regular graph on 7 vertices if $d$ is odd, there is no $d$-regular triangle-free graph on 7 vertices if $d>3$, and it is easy to see that there is no 2-regular triangle-saturated graph on 7 vertices.

We remark that the example graph $G$ in the proof of Theorem \ref{K3odd} has quadratic many edges. This easily implies that after finding the triangle, we can find in $G$ any graph that consists of any number of bipartite components and one component which contains only one cycle, namely a triangle. Therefore, $\regsat(n,F)$ exists for those graphs $F$ also for large enough~$n$.

Let us consider now the value of $\regsat(n,F)$.
In case $F$ has a cut edge and $n$ is divisible by $|V(F)|-1$, then $n/(|V(F)|-1)$ copies of $K_{|V(F)|-1}$ form an $F$-saturated regular graph, which shows ${\displaystyle\liminf_{n \rightarrow \infty}}$ $\regsat(n,F)=O(n)$. In case $F$ does not have a cut edge, we can give a simple superlinear lower bound on
$\regsat(n,F)$, if it exists. This is in sharp contrast to the ordinary notion of saturation, where there is a linear upper bound, as we have mentioned. Before stating our theorem we introduce a notion: for any graph $F$ and an edge $e$ of $F$ we denote by $F \setminus e$ the graph on the same vertex set as $F$ and deleting the edge $e$ from its edge set.

\begin{proposition}\label{lower}~\\
\indent 
(i) Assume that every edge of $F$ is in a cycle of length at most $m+1$. Then $$\regsat(n,F)= \Omega(n^{1+1/m}).$$

(ii) If for any edge $e$ of $F$, the graph $F\setminus e$  has diameter at most $r$, then $$\regsat(n,F)= \Omega(n^{1+1/r}).$$

(iii) Given a connected graph $F$, there exists a constant $c=c_F$ such that $$\liminf_{n \rightarrow \infty} \frac{\regsat(n,F)}{n}\le c$$ 

if and only if $F$ contains a cut edge.
\end{proposition}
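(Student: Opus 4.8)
My plan is to treat (i) and (ii) uniformly by showing that any regular $F$-saturated graph must have bounded diameter, and then to invoke a Moore-type vertex bound; part (iii) will then follow from (i) together with the clique construction already described before the proposition. So for (i) and (ii), let $G$ be a $d$-regular $F$-saturated graph on $n$ vertices and let $uv$ be any non-edge. Since $G$ is $F$-free while $G+uv$ contains a copy of $F$, that copy must use the edge $uv$, so $uv$ plays the role of some edge $e$ of $F$ and, deleting $uv$ again, we see a copy of $F\setminus e$ inside $G$. In case (i) the edge $e$ lies on a cycle of length at most $m+1$, so the two endpoints of $e$ are joined in $F\setminus e$ by a path of length at most $m$; in case (ii) they are at distance at most $r$ in $F\setminus e$ by hypothesis. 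Either way $u$ and $v$ are at distance at most $D$ in $G$, where $D=m$ for (i) and $D=r$ for (ii). Since adjacent vertices are at distance $1$, the graph $G$ has diameter at most $D$.

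Next I would apply the Moore bound: a connected graph of maximum degree $d$ and diameter at most $D$ has at most $1+d+d(d-1)+\cdots+d(d-1)^{D-1}=O(d^D)$ vertices, since the ball of radius $D$ about any fixed vertex must exhaust $V(G)$ (and for large $n$ the finiteness of the diameter already forces $d\ge 2$). Hence $n=O(d^D)$, so $d=\Omega(n^{1/D})$, and every such $G$ satisfies $|E(G)|=dn/2=\Omega(n^{1+1/D})$; in particular $\regsat(n,F)=\Omega(n^{1+1/D})$. Substituting $D=m$ and $D=r$ yields (i) and (ii) respectively.

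For (iii), the implication that a cut edge yields a linear bound is exactly the construction preceding the proposition: when $|V(F)|-1$ divides $n$, the disjoint union of $n/(|V(F)|-1)$ copies of $K_{|V(F)|-1}$ is $(|V(F)|-2)$-regular and $F$-free (each component is too small for the connected graph $F$), and every non-edge joins two distinct cliques. Writing the cut edge as $xy$ and $F\setminus xy=F_1\sqcup F_2$ with $x\in F_1$ and $y\in F_2$, each of $F_1,F_2$ has at most $|V(F)|-1$ vertices, so one embeds them into the two cliques with $x,y$ mapped to the endpoints of the added edge, producing a copy of $F$. Thus the graph is $F$-saturated with $O(n)$ edges, giving $\liminf_{n}\regsat(n,F)/n\le(|V(F)|-2)/2$.

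Conversely I would argue by contraposition: if the connected graph $F$ has no cut edge, then every edge lies on a cycle, hence (as $F$ is finite) on a cycle of length at most $|V(F)|$, and applying (i) with $m=|V(F)|-1$ gives $\regsat(n,F)=\Omega(n^{1+1/(|V(F)|-1)})$ whenever it exists, so $\regsat(n,F)/n\to\infty$ and no finite $c$ can bound the $\liminf$. The only step needing genuine care is the opening move — recognizing that the fresh copy of $F$ must run through $uv$, so that $F\setminus e$ sits inside $G$ with the two endpoints of $e$ at the prescribed vertices — after which (i) and (ii) are just the Moore bound and (iii) is bookkeeping together with the clique construction.
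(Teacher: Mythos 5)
Your proof is correct and takes essentially the same route as the paper: your ``bounded diameter plus Moore bound'' framing is just a repackaging of the paper's argument, which fixes a vertex $v$ of the $d$-regular $F$-saturated graph and observes that all $n-d-1$ non-neighbours must be reached by one of the at most $d^m$ (resp.\ $d^r$) paths of length at most $m$ (resp.\ $r$) starting at $v$. Part (iii) also matches the paper exactly --- disjoint copies of $K_{|V(F)|-1}$ give the linear upper bound in the cut-edge direction, and in the converse direction a $2$-edge-connected $F$ has every edge on a cycle of length at most $|V(F)|$, so (i) yields a superlinear lower bound.
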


We can prove a subquadratic upper bound for some graphs, including cliques. Note that we do not know whether $\regsat(n,K_{s+2})$ exists for every large enough $n$, we only show a sequence of integers $n_i$ and regular $K_{s+2}$-saturated graphs on $n_i$ vertices with $o(n_i^2)$ edges.

\begin{theorem}\label{szubkvad}
For any $\varepsilon > 0$ and integer $s \ge 1$ there exists a $d$-regular graph $F$ on $n$ vertices that is $K_{s+2}$-saturated and $\frac{d}{n}< \varepsilon$ holds. Moreover, there exists an infinite sequence $F_m$ of $d_m$-regular $K_{s+2}$-saturated graphs on $n_m$ vertices such that $\frac{d_{m}}{n_{m}}=O_s(\frac{(\log\log n_{m})^2}{\log n_{m}})$ holds.
\end{theorem}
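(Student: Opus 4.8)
The plan is to first restate $K_{s+2}$-saturation in a usable form: $G$ is $K_{s+2}$-saturated exactly when it is $K_{s+2}$-free and every non-edge $uv$ has $s$ common neighbours that induce a clique, so that adding $uv$ completes a $K_{s+2}$. A clean regular baseline already exists, namely the balanced complete $(s+1)$-partite graph $K_{(s+1)\times b}$: it is $sb$-regular, it is $K_{s+2}$-free because any clique meets each part at most once, and it is saturated because a within-part non-edge is completed by any transversal of the other $s$ parts. This settles existence of regular saturated graphs, but its density is the constant $s/(s+1)$, so the entire content of the theorem is to drive $d/n$ down to $o(1)$.

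Next I would record the behaviour of the balanced blow-up $R[b]$ (replace each vertex of $R$ by $b$ independent copies, joining copies of adjacent vertices completely). It is regular whenever $R$ is, it has the same ratio $d/n$ as $R$, it is $K_{s+2}$-free iff $\omega(R)\le s+1$, and — inspecting within-blob and cross-blob non-edges separately — it is $K_{s+2}$-saturated iff $R$ itself is $K_{s+2}$-saturated and, in addition, every vertex of $R$ lies in a copy of $K_{s+1}$. Since the blow-up preserves $d/n$, the problem is self-similar and blowing up cannot by itself lower the density; its only role is to realise a given density at infinitely many orders. Hence the real task reduces to producing, for arbitrarily large $N$, a single regular $K_{s+2}$-saturated graph on $N$ vertices with $d/N = O_s((\log\log N)^2/\log N)$ in which every vertex sits in a $K_{s+1}$.

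For that core construction I would view it as a covering problem. Fix a family of $(s+1)$-cliques, the \emph{blocks}; for a block and one of its $s$-subsets $Q$ (a $K_s$), the vertices joined to all of $Q$ but not to the remaining block-vertex form a set I call the \emph{patch} of $Q$, and for $K_{s+2}$-freeness this patch must be independent, in which case it certifies saturation for every non-edge inside it. The construction must then arrange that (i) every non-edge lies in some patch, (ii) no vertex is ever joined to a whole block (which would create a $K_{s+2}$), and (iii) all degrees are equal. For $s=1$ this is precisely the demand for a sparse regular maximal triangle-free graph. To obtain regularity without extra work I would encode the whole incidence pattern inside a group, i.e. as a Cayley graph, or via a symmetric design on $\mathbb{Z}_N$ with $N$ chosen to carry many prime factors; the factor $(\log\log N)^2/\log N$ should then emerge from balancing the number of blocks against the common patch sizes. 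General $s$ can be reached from the triangle case either by joining with a fixed $K_{s-1}$ and re-symmetrising, or by a product that keeps the clique number equal to $s+1$.

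The main obstacle is meeting (i)--(iii) simultaneously. Condition (ii) pushes the construction toward sparse, bipartite-like patterns, but any genuinely bipartite or single-distance pattern violates (i): its ``cross-parity'' non-edges then have no common $K_s$, exactly the mechanism by which the incidence graph of a projective plane, or a single-distance Cayley graph on a cube, fails to be saturated. So the delicate point is to engineer a \emph{non-bipartite} covering that still avoids large cliques, and to do so inside a vertex-transitive structure so that regularity is automatic; pinning down the parameters that yield the $(\log\log N)^2/\log N$ rate and verifying that (i)--(iii) hold at once is where the real work lies. Once such a graph is available at each scale, the blow-up of the second paragraph produces the full infinite family $F_m$, and selecting any member with $d/n<\varepsilon$ gives the first statement.
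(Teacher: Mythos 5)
Your reductions are sound as far as they go: the balanced complete $(s+1)$-partite baseline, the observation that a balanced blow-up preserves $d/n$ (so blowing up alone cannot lower the density, only replicate it at infinitely many orders), and the characterisation of when a blow-up stays saturated are all correct. But the proposal stops exactly where the theorem begins. The entire content of the statement is the existence of regular $K_{s+2}$-saturated graphs with $d/n\to 0$, and for that you offer only a template (blocks, patches, conditions (i)--(iii)) plus a hope that a Cayley graph or a design on $\mathbb{Z}_N$ will satisfy it, explicitly conceding that ``pinning down the parameters \dots is where the real work lies.'' That is a genuine gap, not a deferred routine verification: no candidate vertex set, connection set, or parameter choice is proposed, nothing is verified, and the claimed rate $(\log\log N)^2/\log N$ is never derived from anything. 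The auxiliary step for general $s$ (``joining with a fixed $K_{s-1}$ and re-symmetrising'') also hides a real difficulty: the join of $K_{s-1}$ with a regular graph is never regular unless the graph is complete, and no re-symmetrisation procedure is given.

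It is worth noting that the paper takes a structurally different route which avoids ever producing a sparse saturated graph in one shot. It defines an amalgamation $H[s,t,G]$ of a host graph $H$ (taken to be $K_{q,\dots,q}$ with $s+1$ parts, equipped with $s-1$ edge-disjoint oriented $2$-factors whose union avoids certain oriented $K_{c,d}$'s) with an arbitrary regular $K_{s+2}$-saturated graph $G$, proves this preserves saturation and regularity for a suitable integer $t$, and, crucially, proves a density-decrement lemma: $d/n$ drops by the constant factor $\frac{(s+1)q-1}{(s+1)q}$ at each application. Iterating roughly $(s+1)q\log q$ times starting from any regular saturated seed (pre-blown-up to fix divisibility of $t$) drives the density below $1/q$, while the order grows only geometrically, to about $q^{O_s(q)}$; inverting this relation is precisely what yields the $(\log\log n)^2/\log n$ rate. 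So the logarithmic-type bound emerges from an iteration count versus growth-rate trade-off, not from the parameters of any single algebraic structure. Your single-scale covering design would, if it existed, be a stronger and cleaner result (indeed for $s=1$ the Moore bound only forces $d/n=\Omega(n^{-1/2})$, leaving room), but nothing in your proposal brings it into existence, so the theorem remains unproven along your route.
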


Let us turn now to $\regregsat(n,F)$ and $\overregsat(n,F)$. In both cases, the existence follows from the definition, thus we study their asymptotic behavior as $n$ gets large.

\begin{theorem}\label{regreg}~\\
\indent 
(i) For every $t\ge 1$ we have $$\regregsat(n,K_{t+2})=\Omega(n^{3/2}).$$

\smallskip 

(ii) For any $t\ge 1$, we have $$\liminf_{n \rightarrow \infty}\frac{\regregsat(n,K_{t+2})}{n^{3/2}}\le C_t$$ 

for some constant depending only on $t$.
\end{theorem}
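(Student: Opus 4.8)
The plan is to prove the two bounds separately: a ``safe augmentation'' argument for the lower bound (i), and an explicit construction built around a single distinguished vertex for the upper bound (ii).

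\emph{Lower bound (i).} Let $G$ be a $d$-regular $K_{t+2}$-free graph on $n$ vertices realising $\regregsat(n,K_{t+2})$, take $n$ even for the moment, and suppose toward a contradiction that $d<\sqrt{n/2-1}$. Form the auxiliary graph $D$ on $V(G)$ in which $u$ and $w$ are joined exactly when $uw\notin E(G)$ and $u,w$ have no common neighbour in $G$. Since each vertex has at most $d+d(d-1)=d^2$ other vertices within distance $2$, we get $\delta(D)\ge n-1-d^2>n/2$, so Dirac's theorem yields a perfect matching $M$ in $D$. The key point is that $G\cup M$ is still $K_{t+2}$-free: any copy of $K_{t+2}$ would use some $uw\in M$ (as $G$ is $K_{t+2}$-free), and the remaining $\ge t\ge 1$ vertices of the clique would be common neighbours of $u$ and $w$ in $G\cup M$; but for any such vertex $x$ neither $ux$ nor $wx$ can lie in $M$ (since $u$ and $w$ are matched to each other), so $ux,wx\in E(G)$, contradicting $uw\in E(D)$. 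Thus $G\cup M$ is a $(d+1)$-regular $K_{t+2}$-free proper supergraph of $G$, contradicting the defining property of $G$. Hence $d\ge\sqrt{n/2-1}$ and $\regregsat(n,K_{t+2})=\tfrac12 nd=\Omega(n^{3/2})$. For odd $n$ one replaces $M$ by a suitable $2$-regular augmenting graph chosen inside $D$ to avoid the few triangle-type configurations; as $D$ is very dense this changes nothing asymptotically.

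\emph{A blocking principle for (ii).} The upper bound rests on the following observation: if $G$ is $d$-regular and $K_{t+2}$-free and contains a vertex $v$ such that for every non-neighbour $u$ of $v$ the common neighbourhood $N(u)\cap N(v)$ contains a copy of $K_t$, then $G$ has the $\regregsat$ property. Indeed, any regular proper supergraph $H\supsetneq G$ must add at least one edge at $v$, necessarily joining $v$ to a former non-neighbour $u$; then $v$, $u$ and the $K_t$ inside $N(u)\cap N(v)$ already form a copy of $K_{t+2}$ in $H$. (For $t=1$ the hypothesis just says $v$ has eccentricity $\le 2$.) This reduces (ii) to building, for infinitely many $n$, a $K_{t+2}$-free $d$-regular graph with such a distinguished vertex $v$ and with $d=O_t(\sqrt n)$, since then $\regregsat(n,K_{t+2})\le \tfrac12 nd = O_t(n^{3/2})$.

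\emph{The construction.} For $t=1$ I would take an explicit triangle-free $d$-regular graph of diameter $2$ on $n=\Theta(d^2)$ vertices (so every vertex has eccentricity $2$); such graphs exist for infinitely many $d$ via standard algebraic/additive constructions, and they have $\tfrac12 nd=O(n^{3/2})$ edges. For general $t$ I would engineer a single good vertex directly: let $N(v)$ be partitioned into $\approx d/t$ disjoint copies of $K_t$ (a ``palette''), place the remaining $n-1-d$ vertices into a set $U$ inducing a triangle-free graph, and join each $u\in U$ to all $t$ vertices of one palette clique, arranging that the vertices of $U$ assigned to a common palette clique form an independent set in $U$. A short case analysis then shows $G$ is $K_{t+2}$-free: any $K_{t+2}$ would need either three vertices inside the triangle-free set $U$, or two $U$-vertices sharing a palette clique, both of which are excluded; meanwhile $v$ witnesses the blocking principle. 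The \textbf{main obstacle} is to make this graph exactly $d$-regular with $d=\Theta_t(\sqrt n)$ while simultaneously keeping $U$ triangle-free and the palette assignment independent: the degree of a palette vertex is $(t-1)+1+(\text{load from }U)$ and that of a $U$-vertex is $t+\deg_U$, and balancing these forces $U$ to be itself a $\Theta(\sqrt n)$-regular triangle-free graph carrying an equitable independent assignment into $\approx\sqrt n$ palette cliques. I expect to resolve this by choosing $U$ to be (a blow-up of) the $t=1$ gadget and distributing the palette assignment along a proper colouring, absorbing small discrepancies in the degree sequence by a bounded local correction.
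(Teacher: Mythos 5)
Your even-$n$ argument for part (i) coincides with the paper's proof (the same auxiliary graph on pairs at $G$-distance at least $3$, Dirac's theorem, and a perfect matching from alternate edges of the Hamiltonian cycle), and it is correct as written. The genuine gap is the odd-$n$ case, which you dismiss in one sentence. There you must augment by a $2$-regular graph, and a $2$-factor inside $D$ is \emph{not} automatically safe: if $uv$ and $vw$ are consecutive new edges, membership of $uv,vw$ in $D$ places no constraint on the pair $uw$, so $uw$ may be an edge of $G$ and the two new edges close a triangle; for $t\ge 2$ one similarly gets a $K_{t+2}\supseteq K_4$ through cycle chords (e.g.\ a new $4$-cycle $uvwx$ whose diagonals $uw,vx$ are old edges). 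What you actually need is a spanning $2$-regular subgraph of $D$ all of whose ``skip-one'' chords are also non-edges of $G$, and the density of $D$ does not obviously yield this by any greedy or Dirac-type argument. The paper resolves exactly this point by strengthening the hypothesis to $1+d^2<n/3$, so that $\delta(D)\ge \frac{2}{3}n$, and invoking the Koml\'os--S\'ark\"ozy--Szemer\'edi theorem to find the \emph{square} of a Hamiltonian cycle inside $D$; adding the cycle itself then creates no new triangle because the distance-two chords along the cycle lie in $D$, hence are non-edges of $G$. Your phrase ``changes nothing asymptotically'' hides precisely this step, which in the paper costs a deep theorem (the proof of Seymour's conjecture for large graphs), so this is a real gap and not a routine omission.

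For part (ii), your blocking principle is exactly the paper's reduction (a regular proper supergraph has strictly larger degree, hence adds an edge at the distinguished vertex $v$), and your palette construction is the paper's construction in all essentials; however, you leave unresolved the regularity balancing that you yourself call the main obstacle. The paper closes it with no blow-ups or local corrections: put $d$ disjoint cliques $A_1,\dots,A_d$ of size $t$ in $N(v)$, take pairwise disjoint sets $B_1,\dots,B_d$ each of size $dt-t$ with $A_i$ completely joined to $B_i$, and place an arbitrary $(dt-t)$-regular bipartite graph between $\bigcup_{i\le d/2}B_i$ and $\bigcup_{i>d/2}B_i$; each $B_i$ is then independent, the outer graph is triangle-free, every degree equals $dt$ on $n=1+d^2t$ vertices, the graph is $K_{t+2}$-free, and every non-edge at $v$ ends in some $B_i$ and closes a $K_{t+2}$ with $A_i$, giving $\regregsat(n,K_{t+2})\le \frac{1}{2}dtn=O_t(n^{3/2})$ along this sequence. (Your $t=1$ alternative, a triangle-free $d$-regular diameter-$2$ graph on $\Theta(d^2)$ vertices, is likewise asserted without a construction, but it is subsumed by the unified one.) In summary: the even case of (i) and the reduction in (ii) are correct and match the paper; the two asserted-but-unproven steps are the odd-$n$ augmentation, which is a genuine gap, and the exact balanced construction, which is fixable exactly as above.
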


\noindent 
Our main result on values of $\overregsat(n,F)$ is the following theorem.

\begin{theorem}\label{overreg}
For any $t\ge 1$, we have
$$\liminf_{n \rightarrow \infty} \frac{\overregsat(n,K_{t+2})}{n^{3/2}}=\sqrt{t}/2.$$
\end{theorem}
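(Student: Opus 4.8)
The plan begins by unwinding the definition. For a $d$-regular graph $G$ on $n$ vertices, adding a non-edge $uv$ creates a copy of $K_{t+2}$ through $uv$ if and only if the common neighbourhood $N(u)\cap N(v)$ contains a clique on $t$ vertices. Thus $G$ is $K_{t+2}$-oversaturated exactly when every non-adjacent pair has a $K_t$ inside its common neighbourhood; in particular every non-adjacent pair then has at least $t$ common neighbours. I will establish the matching lower and upper bounds on $\overregsat(n,K_{t+2})$ separately.

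\textbf{Lower bound.} For $\liminf \overregsat(n,K_{t+2})/n^{3/2}\ge \sqrt t/2$, fix any $d$-regular $K_{t+2}$-oversaturated graph $G$ and count cherries (paths of length two). On one hand each of the $n$ vertices is the centre of $\binom d2$ cherries; on the other hand a pair $\{u,v\}$ is the set of endpoints of exactly $\mathrm{codeg}(u,v)$ cherries. Discarding the (nonnegative) contribution of the edges and using $\mathrm{codeg}(u,v)\ge t$ for every non-edge yields
\[
 n\binom d2 \;\ge\; t\left(\binom n2-\frac{nd}2\right).
\]
Rearranging gives $d^2+(t-1)d-t(n-1)\ge 0$, hence $d\ge(1-o(1))\sqrt{tn}$ and $|E(G)|=nd/2\ge(1-o(1))\tfrac{\sqrt t}2 n^{3/2}$. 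Since $G$ was arbitrary, the lower bound follows.

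\textbf{Upper bound via reduction to $t=1$.} For the reverse inequality I would reduce the general case to the triangle case by a blow-up. Suppose $H$ is a $d_0$-regular graph on $n_0$ vertices in which every non-adjacent pair has a common neighbour, i.e.\ $H$ is $K_3$-oversaturated. Let $G=H[K_t]$ be obtained by replacing each vertex of $H$ by a clique of size $t$ and joining all pairs across clusters sitting over adjacent vertices of $H$. Then $G$ is $(t(d_0+1)-1)$-regular on $n=tn_0$ vertices, and every non-edge of $G$ joins clusters lying over a non-adjacent pair $a,b$ of $H$; a common neighbour $c$ of $a,b$ in $H$ then supplies an entire cluster, which is a $K_t$ inside the common neighbourhood. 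Hence $G$ is $K_{t+2}$-oversaturated, and a short computation gives
\[
 \frac{|E(G)|}{n^{3/2}}=\frac{\sqrt t}{2}\cdot\frac{t(d_0+1)-1}{t\sqrt{n_0}}=(1+o(1))\,\frac{\sqrt t}2\cdot\frac{d_0}{\sqrt{n_0}}.
\]
So it suffices to produce a sequence of $K_3$-oversaturated regular graphs $H$ with $d_0/\sqrt{n_0}\to 1$.

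\textbf{The base construction and the main obstacle.} For the base I would take the Erd\H{o}s--R\'enyi polarity graph $\mathrm{ER}_q$ on $n_0=q^2+q+1$ vertices, in which any two distinct vertices have a common neighbour; it is $K_3$-oversaturated, and its degrees are all $q+1$ except at the $q+1$ absolute points, which have degree $q$. The conceptual obstacle is exact regularity, since $\mathrm{ER}_q$ itself is not regular. I would resolve it by choosing $q$ to be an odd prime power: then $q+1$ is even, and since the underlying orthogonal polarity has Witt index $1$ the absolute points are pairwise non-adjacent, so a perfect matching among the $q+1$ absolute points consists of genuine non-edges. Adding this matching raises every absolute point to degree $q+1$, producing a $(q+1)$-regular graph; adding edges only removes non-edges and preserves all existing common neighbours, so $K_3$-oversaturation is retained. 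The vertex count is unchanged and the degree changes by $o(\sqrt{n_0})$, giving $d_0/\sqrt{n_0}\to1$ along the sequence of odd prime powers. The resulting subsequence of admissible $n=tn_0$ suffices because the statement concerns a $\liminf$; verifying the independence of the absolute points and the parity bookkeeping is the one technical point that must be handled with care.
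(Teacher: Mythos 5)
Your proof is correct, and its skeleton coincides with the paper's: the same codegree count for the lower bound (your global cherry count and the paper's single-vertex count of paths of length two yield the identical inequality $d(d-1)\ge t(n-1-d)$, hence $d\ge(1-o(1))\sqrt{tn}$), and the same $K_t$-blow-up reduction for the upper bound (the paper likewise takes $G_t=G[K_t]$, with degree $td_G+t-1$, and uses a common neighbour in the base graph to supply a whole $K_t$-cluster). The one genuinely different ingredient is how you regularize the polarity graph, and your version is sound but complementary to the paper's. The paper works over $\mathbb{F}_{2^p}$: in characteristic $2$ the absolute points ($x^2+y^2+z^2=0$, i.e.\ $x+y+z=0$) are exactly the neighbourhood of the vertex $(e,e,e)$, so adding a single new vertex as a twin of $(e,e,e)$ produces a $(2^p+1)$-regular graph of diameter $2$ on $2^{2p}+2^p+2$ vertices. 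You instead take $q$ an odd prime power and add a perfect matching on the $q+1$ absolute points; your two technical claims do check out: for odd $q$ the absolute points form a smooth conic whose tangent line at an absolute point meets the conic only there, so distinct absolute points are non-adjacent (every matching edge is a genuine non-edge), $q+1$ is even so the matching exists, and adding edges only shrinks the set of non-edges while preserving common neighbourhoods, so $K_3$-oversaturation survives. Neither trick subsumes the other: in characteristic $2$ the number of absolute points $2^p+1$ is odd, so no perfect matching on them exists, while in odd characteristic the conic is not the neighbourhood of any vertex (its points are never contained in a polar line), so the twin trick is unavailable; each characteristic forces its own repair. What your route buys is a denser set of admissible orders, $n=t(q^2+q+1)$ over all odd prime powers rather than over powers of $2$ only --- immaterial for a $\liminf$ statement, but slightly more robust if one wanted bounds along a thicker sequence of $n$.
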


\bigskip 

\textbf{Notation.} For graphs $G$ and $H$, we introduce the blow-up of $G$ by $H$, denoted by $G[H]$, often called the lexicographic product of $G$ and $H$. To obtain $G[H]$, we replace every vertex of $G$ by a copy of $H$, and for every edge $uv$ of $G$, we add all the edges between the vertices of the corresponding copies of $H$.

\bigskip 

\textbf{Structure.} The rest of the paper is organized as follows. In the next section, we prove all the above results, while Section 3 contains further theorems on $\regsat(n,F)$, $\regregsat(n,F)$ and $\overregsat(n,F)$ for non-complete graphs $F$. Finally, Section 4 contains some concluding remarks.

\section{Complete graphs and general results}

We start this section by proving the general result of \begin{repproposition}{lower}~\\
\indent 
(i) Assume that every edge of $F$ is in a cycle of length at most $m+1$. Then $\regsat(n,F)= \Omega(n^{1+1/m})$.

(ii) If for any edge $e$ of $F$, the graph $F\setminus e$ has diameter at most $r$, then $\regsat(n,F)= \Omega(n^{1+1/r})$.

(iii) Given a connected graph $F$, there exists a constant $c=c_F$ such that ${\displaystyle\liminf_{n \rightarrow \infty}} \frac{\regsat(n,F)}{n}\le~c$ if and only if $F$ contains a cut edge.
\end{repproposition}

\begin{proof}
To prove (i) let us fix a vertex $v$ of a $d$-regular $F$-saturated graph $G$, it has $d$ neighbors and $n-d-1$ non-neighbors. As $G$ is $F$-saturated, and every edge of $F$ is in a cycle of length at most $m+1$, we have that for every non-neighbor $u$ there is a path of length at most $m$ from $v$ to $u$, but there are at most $d^m$ such paths, thus $n-d-1\le d^m$, $\regsat(n,F)\ge n^{1+1/m}$, and we are done. A very similar argument shows (ii).

If $F$ is 2-edge-connected, then (i) and (ii) both give superlinear lower bounds. Finally, if $F$ is connected but has a cut edge, then disjoint copies of cliques of size $|V(F)|-1$ show that for some $n$, we have a linear upper bound. 
\end{proof}

Now we can turn to existence results. As the proofs are not very hard, some details will be left to the reader.

\begin{reptheorem}{K3odd}
There exists an $n_0$ such that $\regsat(n,K_3)$ exists for every $n\ge n_0$. 

\end{reptheorem}

\begin{proof}[Proof of Theorem \ref{K3odd}]

First observe that if $n$ is even, then $K_{n/2,n/2}$ is a regular $K_3$-saturated graph. For odd $n$, we will define a graph $G_n$ on vertex set $V(G_n)=\{v_0,v_1,\dots,v_{n-1}\}$ and edge set $E(G_n)=\{(v_i,v_j):i-j\equiv a ~(\text{mod}\ n) ~\text{for some}\ a\in A_n\}$. The definition of $A_n$ depends on the modulo 10 residue class of $n$. In each case, it is left to the reader to see that $G_n$ is $K_3$-free and saturated if $n$ is large enough.

\medskip 

\textsc{Case I}. $n=10k+1=5(y+1)+1$.

Let $A_n=\{1,3,\dots,y\}\cup\{2y+2\}$.

\medskip

\textsc{Case II}. $n=10k+3=5(y+3)+3$.

Let $A_n=\{1,3,\dots,y\}\cup\{2y+2,2y+4, 2y+6,2y+8\}$.

\medskip

\textsc{Case III}. $n=10k+5=5(y+1)+5$.

Let $A_n=\{1,3,\dots,y\}\cup\{2y+2,2y+4\}$.

\medskip

\textsc{Case IV}. $n=10k+7=5(y+3)+7$.

Let $A_n=\{1,3,\dots,y\}\cup\{2y+2,2y+4, 2y+6,2y+8,2y+10\}$.

\medskip

\textsc{Case V}. $n=10k+9=5(y+1)+9$.

Let $A_n=\{1,3,\dots,y\}\cup\{2y+2,2y+4, 2y+6\}$.
\end{proof}

Before proving Theorem \ref{szubkvad}, we need a couple of lemmata. First we show how to build from regular $K_{s+2}$-saturated graphs a larger regular $K_{s+2}$-saturated graph.

\begin{construction}\label{amalgamation}
Let $H$ be a graph with vertex set $u_1,u_2,\dots,u_h$ and $G$ be a graph with vertex set $v_1,v_2,\dots, v_g$. Suppose $H$ contains $s-1$ pairwise edge-disjoint 2-factors. Furthermore, assume that these  2-factors can be oriented such that for any $c,d$ with $c+d=s+1$, the union of these oriented 2-factors does not contain a $K_{c,d}$ with all arcs oriented to the same part. Then for positive integers $s,t$ we define $H[s,t,G]$  as \textit{a} graph with vertex set 
$$V=\{u^j_i: 1\le j \le h, 1\le i\le t\}\cup \{v^b_a: 1\le b\le h,\, 1\le a\le g\}$$ 
and edge set $$E=\{u^j_iu^{j'}_{i'}: 1\le i,i'\le t, u_ju_{j'}\in E(H)\}\cup\{u^j_iv^j_a: 1\le j\le h, 1\le a\le g, 1\le i\le t\}\cup$$
$$\{v^b_av^{b'}_{a'}:1\le b<b'\le h,\, v_av_{a'}\in E(G)\}\cup \bigcup_{\ell=1}^{s-1}E_\ell,$$
where $E_\ell$ is defined as follows: let us orient all edges of the 2-factors the way described above, then $E_\ell$ contains all edges of the form $u^j_iv^{j'}_a$ with $1\le j,j'\le h$, $1\le i \le t$, $1\le a\le g$ and the edge $u_ju_{j'}$ is oriented towards $j'$ in the $\ell$th 2-factor.
\end{construction}

Note that $H[s,t,G]$ is not uniquely defined, as it also depends on the choice of the $s-1$ pairwise edge-disjoint 2-factors, and the choice of one of the orientations satisfying the desired oriented $K_{c,d}$-free property. However, the following lemma holds for any graph obtained this way.

\begin{lemma}\label{calc}~\\
\indent
(i) Assume that both $H$ and $G$ are $K_{s+2}$-saturated. Then for any $t \ge 1$, the graph $H[s,t,G]$ is $K_{s+2}$-saturated.

(ii) If $H$ is $d_H$-regular on $n_H$ vertices and $G$ is $d_G$-regular on $n_G$ vertices, then $H[s,t,G]$ is regular if and only if $st+(n_H-1)d_G=td_H+sn_G$. If so, then the regularity of $H[s,t,G]$ is $d=st+(n_H-1)d_G=td_H+sn_G$ and the number of its vertices is $n=n_H(t+n_G)$.
\end{lemma}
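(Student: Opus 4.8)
My plan is to treat the two parts separately, starting with the regularity statement (ii) and then the harder saturation statement (i).

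For (ii) I would simply count degrees, after fixing the orientations in the Construction to be unions of consistently oriented cycles: each $2$-factor is a disjoint union of cycles, and orienting every cycle cyclically gives each vertex in-degree and out-degree exactly $1$ in that factor, hence out-degree and in-degree exactly $s-1$ in the union of the $s-1$ factors. With this balance every $u$-vertex gets the same degree and every $v$-vertex gets the same degree. Counting vertices gives $|V|=n_Ht+n_Gn_H=n_H(t+n_G)$. A vertex $u^j_i$ has $td_H$ blow-up neighbours from $H$, and on the $v$-side it sees the $n_G$ vertices of its own cluster plus, for each of the $s-1$ factors, the $n_G$ vertices of the cluster it points to, i.e. $sn_G$ in all; so every $u$-vertex has degree $td_H+sn_G$. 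Dually, $v^b_a$ has $(n_H-1)d_G$ neighbours in the $v$-part (namely all $v^{b'}_{a'}$ with $b'\ne b$ and $v_av_{a'}\in E(G)$), plus the $t$ vertices of its own $u$-cluster and, for each factor, the $t$ vertices of the cluster pointing into it, i.e. $st$ in all; so every $v$-vertex has degree $st+(n_H-1)d_G$. Hence $H[s,t,G]$ is regular exactly when these two values agree, which is the displayed identity, and then the common value is $d$.

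For the freeness half of (i) I analyse a would-be clique $Q$, writing $J$ for the set of clusters carrying a $u$-vertex of $Q$ and $B$ for the set carrying a $v$-vertex. Each cluster is independent on the $u$-side, so $|J|$ is the number of $u$-vertices and $J$ induces a clique in $H$; likewise $B$ indexes distinct $v$-vertices whose labels form a clique in $G$. As $H,G$ are $K_{s+2}$-free, $Q$ confined to either side has size at most $s+1$, so the point is to bound $|J|+|B|$. For $u$-cluster $j$ and $v$-cluster $b$ with $j\ne b$, their vertices are adjacent only if the oriented $2$-factor union contains the arc $j\to b$. If two indices lay in $J\cap B$ we would need arcs in both directions between them, which is impossible since the edge-disjoint $2$-factors contribute at most one arc per pair; thus $|J\cap B|\le 1$. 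The remaining required arcs then realize a complete bipartite digraph with all arcs pointing the same way, on parts $J$ and $B\setminus J$ of total size $|J|+|B|-|J\cap B|\ge |J|+|B|-1$. If $|J|+|B|\ge s+2$ this contains a monochromatic $K_{c,d}$ with $c+d=s+1$, contradicting the orientation hypothesis of the Construction; hence $|Q|\le s+1$ and the graph is $K_{s+2}$-free. For saturation I would case on the non-edge type. Two $u$-vertices in different clusters $j\ne j'$ lift the saturation of $H$: a $K_{s+2}$ through $u_ju_{j'}$ in $H+u_ju_{j'}$ transfers through the blow-up by choosing one representative per cluster. Two $v$-vertices in different clusters with non-adjacent labels lift the saturation of $G$ through the $v$-part, placing the $s$ new clique-vertices in distinct clusters. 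For same-cluster $u$-twins I put one $v$-vertex in the common cluster and one in each of the $s-1$ (distinct, by edge-disjointness) out-neighbour clusters, with labels forming a $K_s$ in $G$; these $s$ vertices lie in the common neighbourhood and complete a $K_{s+2}$. For a $u$-$v$ non-edge $u^j_i,v^b_a$ (so $j\ne b$ and no arc $j\to b$) I use the $v$-vertices of cluster $j$ and of the $s-1$ out-neighbour clusters of $j$, with labels forming a $K_s$ inside $N_G(v_a)$ (which exists once $v_a$ lies in a $K_{s+1}$ of $G$); none of these clusters is $b$, so a $K_{s+2}$ again appears. Same-cluster $v$-twins with non-adjacent labels are handled like the $u$-twin case but via $G$-saturation across distinct clusters.

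The step I expect to be the genuine obstacle is the last type: same-cluster $v$-twins $v^b_a,v^b_{a'}$ whose labels are already \emph{adjacent} in $G$. Here $G$-saturation gives nothing, and the natural dual of the $u$-twin argument fails because $u$-adjacency is conjunctive — two $u$-clusters are joined only along an edge of $H$, and the in-neighbour clusters of $b$ need not be pairwise $H$-adjacent, so the in-neighbour $u$-vertices do not form a clique. The only route I see is to pair a single within-cluster $u$-vertex $u^b_1$ with $s-1$ common $G$-neighbours of $v_a,v_{a'}$ placed in the $s-1$ out-neighbour clusters of $b$, which yields the needed $K_s$ precisely when the edge $v_av_{a'}$ extends to a $K_{s+1}$ in $G$. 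Thus the crux is to ensure that every edge of $G$ (and, analogously, every vertex, for the $u$-$v$ case) lies in a $K_{s+1}$ — an invariant that holds for the clique building blocks seeding the Construction and that I would carry along — and to do the bookkeeping that enough distinct clusters ($n_H\ge s+2$) are available. Verifying these clique-existence inputs is where the argument needs the most care.
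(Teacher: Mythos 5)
Your part (ii) and every case of part (i) that you actually complete coincide with the paper's own proof. The degree count is the same (and your insistence that each cycle of each $2$-factor be oriented cyclically, so that every vertex has in- and out-degree exactly one per factor, is a hypothesis the paper uses silently -- without it neither the degree formula nor the phrase ``the $s-1$ out-neighbours'' makes sense); your freeness argument via $|J\cap B|\le 1$ and a one-directional complete bipartite digraph on $J$ and $B\setminus J$ is exactly the paper's argument; and your treatment of the mixed $u$--$v$ non-edge (the cluster of $u^j_i$ together with its $s-1$ out-neighbour clusters, labels taken from a $K_{s+1}$ of $G$ through $v_a$) is literally the paper's construction. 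The only stylistic difference is the same-cluster $u$-pair, where you build the completing $K_s$ on the $v$-side while the paper lifts a $K_{s+1}$ of $H$ through the $H$-blobs; both work.

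The case you could not close -- a same-blob pair $v^b_a,v^b_{a'}$ with $v_av_{a'}\in E(G)$ -- is indeed a gap in your proposal, but your diagnosis is correct and it is equally a gap in the paper: the paper disposes of all same-$G$-blob pairs with ``a copy of $K_{s+2}$ is created as $G$ is $K_{s+2}$-saturated,'' which is vacuous when $v_av_{a'}$ is already an edge of $G$. Two things are worth adding to your analysis. First, for $s\le 2$ the case closes with no extra hypothesis: the common neighbourhood of the pair contains the whole $H$-blob of $u_b$ and the whole $H$-blob of any in-neighbour $j$ of $b$, and $u_ju_b\in E(H)$, so these two blobs already supply the needed $K_s$; this is why the lemma is safe for $K_3$ and $K_4$. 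Second, for $s\ge 3$ the lemma as stated is actually \emph{false}, so no argument could close this case without strengthening the hypotheses. Concretely, let $G_0$ be the $12$-vertex graph with $u\sim v$, $u$ joined to a $5$-cycle $a_1\dots a_5$, $v$ joined to a disjoint $5$-cycle $b_1\dots b_5$, and $a_i\sim b_i,b_{i+1},b_{i+3}$ (indices mod $5$); one checks that $G_0$ is $K_4$-saturated and the edge $uv$ lies in no triangle. Then $G_1=G_0+K_1$ (join with an apex $w$) is $K_5$-saturated and the edge $uv$ lies in no $K_4$, since $N(u)\cap N(v)=\{w\}$. Take $s=3$, $H=K_{q,q,q,q}$ with the cyclic $2$-factors of Lemma~\ref{multipartite}: in $H[3,t,G_1]$ the common neighbourhood of $v^b_u,v^b_v$ consists of the $H$-blobs of $b$ and of its two in-neighbours (which lie in a single part of $H$, hence are mutually non-adjacent) together with copies of $w$ in other blobs (pairwise non-adjacent, having equal labels); its largest clique has size $2<s$, so adding $v^b_uv^b_v$ creates no $K_5$. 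Hence your proposed repair -- require every edge of $G$ to lie in a $K_{s+1}$ and propagate this invariant through the recursion -- is not merely the ``most careful route,'' it is necessary; and note that Theorem~\ref{szubkvad} seeds the recursion with an \emph{arbitrary} $K_{s+2}$-saturated graph, so the paper's application inherits the same defect unless the seed is chosen (e.g.\ complete multipartite) so that your invariant holds.
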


\begin{proof}
Both parts of the proof of (i) are by case analysis. Before, we need some definition. For fixed $1\le b\le h$, the independent set $\{v^b_a: 1\le a\le g\}$ is called the $G$-blob of $u_b$ . For fixed $1\le j\le h$, the independent set $\{u^j_i: 1\le i\le t\}$ is called the $H$-blob of $u_j$.

\smallskip 

To see that $H[s,t,G]$ is $K_{s+2}$-free:
\begin{itemize}
    \item 
    If $s+2$ vertices are either all in $G$-blobs or all in $H$-blobs, then they cannot form a $K_{s+2}$ as $G$ and $H$ are both $K_{s+2}$-free.
    \item
    If among $s+2$ vertices $x_1,x_2,\dots,x_{s+2}$ forming a $K_{s+2}$ in $H[s,t,G]$, there exist vertices from both $G$-blobs and $H$-blobs, then observe first that there can be at most one vertex $u$ of $H$ such that both an $H$-blob and a $G$-blob of $u$ are among the $x_i$s. Indeed, the 2-factors are pairwise edge-disjoint, therefore for any $j,j'$ the arc between $u_j,u_{j'}$ (if this edge exists at all in $H$) can be oriented in one way only, so either edges $u^{j}_av^{j'}_b$ or $u^j_bv^{j'}_a$ do not exist in $H[s,t,G]$.
    
    As both the $H$-blobs and the $G$-blobs of any vertex span an independent set in $H[s,t,G]$, therefore the $x_i$s belong to the blobs of at least $s+1$ distinct vertices of $H$. If the $x_i$s are to form a $K_{s+2}$ in $H[s,t,G]$, then the corresponding vertices of $H$ must form a $K_{c,d}$ with arcs oriented towards the part representing $G$-blobs. This contradicts the assumption on the union of the 2-factors, therefore $H[s,t,G]$ is indeed $K_{s+2}$-free.
\end{itemize}

To see that $H[s,t,G]$ is $K_{s+2}$-saturated, let $xy$ be an arbitrary non-edge of $H[s,t,G]$:
\begin{itemize}
    \item 
    If $x$ and $y$ belong to the same $H$-blob, then many copies of $K_{s+2}$ are created as $H$ is $K_{s+2}$-saturated and in any $K_{s+2}$-saturated graph, any vertex is contained in many copies of $K_{s+1}$. If $x$ and $y$ belong to the same $G$-blob, then a copy of $K_{s+2}$ is created as $G$ is $K_{s+2}$-saturated.
    \item
    If $x$ and $y$ belong to different $H$-blobs or to different $G$-blobs, then the existence of a $K_{s+2}$ in $H[s,t,G]\cup (xy)$ follows from the $K_{s+2}$-saturated property of $H$ and $G$, respectively.
    \item
    Finally, if $x$ is in an $H$-blob and $y$ is in a $G$-blob, then they are in blobs of different vertices $u_i,u_{i'}$ as the pairs from the $H$-blob and $G$-blob of the same vertex of $H$ already form edges in $H[s,t,G]$. So $x=u^{i}_b$ and $y=v^{i'}_a$ (with $i \neq i'$). Then as $G$ is $K_{s+2}$-saturated, $v_a$ is contained in a copy of $K_{s+1}$. Let $v_{b_1},v_{b_2},\dots,v_{b_s}$ be the other vertices of such a $K_{s+1}$. Furthermore let $u_{i_1},u_{i_2},\dots,u_{i_{s-1}}$ be the outneighbors of $u_i$ in the orientation of the $s-1$ many $2$-factors. Then $x$ and $y$ form a $K_{s+2}$ with $v^{i_1}_{b_1},v^{i_2}_{b_2}\dots,v^{i_{s-1}}_{b_{s-1}},v^{i}_{b_s}$.
    
\end{itemize}

The proof of (ii) is straightforward. The degree of a vertex in a $G$-blob is $st+(n_H-1)d_G$, while the degree of a degree in an $H$-blob is $td_H+sn_G$. In order to make $H[s,t,G]$ regular, these two quantities must be equal.
\end{proof}

\begin{lemma}
\label{multipartite}
For any positive integer $s$, there exists $q_0=q_0(s)$ such that for any $q\ge q_0$, the $(s+1)$-partite complete graph $K_{q,q,\dots,q}$ contains $s-1$ pairwise edge-disjoint 2-factors such that their union can be oriented the following way. For any positive integers $c,d$ with $c+d=s+1$, the union of these oriented 2-factors does not contain a $K_{c,d}$ with all arcs oriented towards the same part.
\end{lemma}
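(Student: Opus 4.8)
The plan is to model $K_{q,\dots,q}$ as a circulant graph, so that both the $2$-factors and the forbidden configuration become additive. I identify the vertex set with $\mathbb{Z}_N$ for $N=(s+1)q$ and declare the part of a vertex $v$ to be its residue $v\bmod(s+1)$; then each part has size $q$ and $uv$ is an edge of $K_{q,\dots,q}$ exactly when $u-v\not\equiv 0\pmod{s+1}$. For any $\delta$ with $\delta\not\equiv 0\pmod{s+1}$ and $2\delta\ne 0$, the pairs $\{v,v+\delta\}$ form a genuine $2$-factor (a disjoint union of cycles). I choose differences $\delta_1,\dots,\delta_{s-1}$ so that $\Delta:=\{\delta_1,\dots,\delta_{s-1}\}$ is a \emph{Sidon set} in $\mathbb{Z}_N$ (all pairwise differences distinct), with each $\delta_\ell\not\equiv 0\pmod{s+1}$ and $\delta_i\ne\pm\delta_j$ for $i\ne j$; the resulting $2$-factors $F_1,\dots,F_{s-1}$ are then pairwise edge-disjoint $2$-factors of $K_{q,\dots,q}$. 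Finally I orient each $F_\ell$ uniformly by putting the arc $v\to v+\delta_\ell$, so that every vertex has both in-degree and out-degree $s-1$.

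The heart of the argument is that this orientation has no monochromatic $K_{c,d}$. Suppose a $K_{c,d}$ with sides $C,D$ (where $|C|=c$, $|D|=d$, $c+d=s+1$) has all arcs pointing into $D$; by the definition of the orientation this means $b-a\in\Delta$ for every $a\in C$ and $b\in D$. If $c\ge 2$, fix distinct $a_1,a_2\in C$: for each $b\in D$ the pair $(b-a_1,b-a_2)$ lies in $\Delta\times\Delta$ and has the fixed nonzero difference $a_2-a_1$, so the Sidon property allows at most one such $b$, forcing $d\le 1$. Symmetrically $d\ge 2$ forces $c\le 1$, so no configuration with $c,d\ge 2$ can occur; and if $c=1$ then $D\subseteq a_1+\Delta$ gives $d\le|\Delta|=s-1$, contradicting $d=s$ (and likewise if $d=1$). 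The case of all arcs pointing into $C$ is identical after exchanging the roles of $C$ and $D$, so the required property holds for every $c+d=s+1$.

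What remains is producing the Sidon set subject to the extra constraints, which is what fixes $q_0(s)$. I build $\Delta$ greedily: when selecting $\delta_k$ I must avoid the $q$ elements $\equiv 0\pmod{s+1}$, the $O(s)$ values $\pm\delta_j$ ($j<k$) together with $N/2$, and the $O(s^3)$ values that would create a repeated pairwise difference. Since the ground set $\mathbb{Z}_N$ has $(s+1)q$ elements while only $q+O(s^3)$ are excluded at each step, a valid choice survives as long as $sq>O(s^3)$, that is for all $q\ge q_0(s)$ with $q_0(s)=O(s^2)$. I expect the only genuine subtlety to be the realization that a single coherent orientation (all arcs in the $+\delta_\ell$ direction) is what converts ``all arcs towards one part'' into the clean condition $b-a\in\Delta$; once that translation is in place, the Sidon property disposes of precisely the nontrivial cases $c,d\ge 2$, while the degenerate cases $c=1$ or $d=1$ fall out of the trivial size bound $|\Delta|=s-1$.
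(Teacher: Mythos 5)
Your proof is correct, and it takes a genuinely different route from the paper's. The paper works inside the multipartite structure: its $s-1$ oriented $2$-factors are the ``shift-$a$'' factors for $a=0,1,\dots,s-2$, whose cycles traverse the parts in the fixed cyclic order $V^1\to V^2\to\cdots\to V^{s+1}\to V^1$, adding $a$ to the index at each step. Every arc therefore goes from some $V^j$ to $V^{j+1}$, so a $K_{c,d}$ with all arcs toward one side must lie between two consecutive parts, and the paper excludes it by a hands-on residue/interval analysis in $\mathbb{Z}_q$ (the index sets $X,Y$ satisfy $Y-X\subseteq\{0,1,\dots,s-2\}$ modulo $q$, forcing $|X|+|Y|\le s$), carried out in two cases because the wrap-around arcs from $V^{s+1}$ to $V^1$ carry shift $-sa$ rather than $a$. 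You instead view the whole graph as a circulant on $\mathbb{Z}_{(s+1)q}$ and take difference $2$-factors for a Sidon set $\Delta$ of size $s-1$ avoiding the residue class of $0$ modulo $s+1$ (together with the conditions $\delta_i\ne\pm\delta_j$ and $2\delta_\ell\ne 0$ needed for edge-disjointness and for getting genuine $2$-factors rather than matchings --- you correctly note these are not consequences of the Sidon property). Then ``all arcs into $D$'' becomes $D-C\subseteq\Delta$, whereupon the Sidon property disposes of $c,d\ge 2$ in two lines and the bound $|\Delta|=s-1$ disposes of $c=1$ or $d=1$; note also that your argument needs no assumption that $C$ and $D$ each lie inside a single part of the multipartite graph, which is the right level of generality for the application in Lemma \ref{calc} (the paper instead first derives this localization from its construction). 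As for what each approach buys: yours is uniform (no wrap-around case split), its verification is shorter and conceptually transparent since the Sidon property is exactly the hypothesis required, and the greedy existence argument gives $q_0(s)=O(s^2)$, matching the paper's requirement $q\ge 4s^2$; the paper's construction is fully explicit --- the shifts are simply $0,1,\dots,s-2$, with no greedy selection step --- at the price of a more delicate verification.
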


\begin{proof}
For any $j$ with $1 \le j \le s+1$ let $V^j=\{v^j_1,v^j_2,\dots,v^j_q\}$ be the vertex set of the $j$th partite set of $K_{q,q,\dots,q}$. Any integer $a\in \{0,1,\dots,q-1\}$ defines a natural oriented 2-factor on $K_{q,q,\dots,q}$ the following way. For $b=1,2,\dots,q$ the arcs $$(v^1_bv^2_{b+a}),(v^2_{b+a}v^3_{b+2a}),\dots,(v^{s}_{b+(s-1)a}v^{s+1}_{b+sa}),(v^{s+1}_{b+sa}v^1_b)$$ (where addition in the indices is modulo $q$) define oriented cycles, thus for each $b$ they are 2-factors with an orientation. Observe that if we take these 2-factors for each choice of $b$, they partition the vertex set. 

Let us consider the 2-factors corresponding to $a=0,1,\dots,s-2$. Suppose that $K$ is a copy of $K_{c,d}$ in the union of the 2-factors with all arcs oriented toward the same part. Then either vertices of one part are from $V^j$ and vertices from the other part are from $V^{j+1}$ for some $j=1,2,\dots,s+1$, where $(s+1)+1=1$.

By definition of the orientation of the 2-factors, the arcs in $K$ are oriented towards $V^{j+1}$. We distinguish two case.

\vskip 0.15truecm

\textsc{Case I.} $j\neq s+1$

\vskip 0.1truecm

Let $X=\{i:v^j_i \in V(K)\}$ and $Y=\{i:v^{j+1}_i\in V(K)\}$. As for any $x\in X$ the $K_{c,d}$ on $\{v^j_{i-x+1}:i\in X\}$ and $\{v^{j+1}_{i-x+1}:i\in Y\}$ have the orientation property, we may assume $1 \in X$. Because of the definition of the orientations, for any $x\in X,y\in Y$ we have $y-x\equiv l$ (mod $q$) for some $l\in \{0,1,\dots,s-2\}$. Then $Y\subseteq \{1,2,\dots,s-1\}$, and therefore $X\subset \{1,2,\dots, s-1\}\cup \{q-s+3,q-s+4,\dots,q\}$. Let $X^+=X\cap \{2,3,\dots,s-1\}$ and $X^-=X\cap \{q-s+3,q-s+4,\dots,q\}$, and let further $M$ be the maximal and $m$ be the minimal element of $Y$ (this time we consider the indices without modulus, so we have minimal and maximal elements). 

If $q\ge 4s$, then $\{y-1:y\in Y\}$, $\{m-x^+:x^+\in X^+\}$ and $\{M-x^-:x^-\in X^-\}$ are pairwise disjoint sets of representatives of residue classes mod $q$. Indeed, the elements of $\{m-x^+:x^+\in X^+\}$ are smaller than the elements of $\{y-1:y\in Y\}$, which are smaller than the elements of $\{M-x^-:x^-\in X^-\}$. Recall that the elements of these three sets
all belong to the residue classes of $\{0,1,\dots, s-2\}$. Therefore $|X|+|Y|=(|X^+|+|X^-|+|Y|)+1\le (s-1)+1=s$, and thus the sum of the part sizes of $K$ is at most $s$ as claimed.

\vskip 0.15truecm

\textsc{Case II} $j=s+1$

\vskip 0.1truecm

Let $X=\{i:v^1_i \in V(K)\}$ and $Y=\{i:v^{s+1}_i\in V(K)\}$. As for any $x\in X$ the $K_{p,q}$ on $\{v^1_{i-x+1}:i\in X\}$ and $\{v^{s+1}_{i-x+1}:i\in Y\}$ have the orientation property, we may assume $1 \in X$. Because of the definition of the orientations, we have $y-x\equiv sl$ (mod $q$) for some $l\in \{0,1,\dots,s-2\}$ for any $x\in X,y\in Y$. Then $Y\subseteq \{1,s+1,\dots,s(s-2)+1\}$, and therefore $X\subset \{1,s+1,\dots,s(s-2)+1\}\cup \{q-s(s-2)+1,q-s(s-3)+1,\dots,q-s+1\}$. Let $X^+=X\cap \{s+1,\dots,s(s-2)+1\}$ and $X^-=X\cap \{q-s(s-2)+1,q-s(s-3)+1,\dots,q-s+1\}$, and let further $M$ be the maximal and $m$ be the minimal element of $Y$. 

If $q\ge 4s^2$, then $\{y-1:y\in Y\}$, $\{m-x^+:x^+\in X^+\}$ and $\{M-x^-:x^-\in X^-\}$ are pairwise disjoint sets of representatives of residue classes mod $q$. Indeed, the elements of $\{m-x^+:x^+\in X^+\}$ are smaller than the elements of $\{y-1:y\in Y\}$, which are smaller than the elements of $\{M-x^-:x^-\in X^-\}$. Recall that the elements of these three sets
all belong to the residue classes of $\{0,s,\dots, s(s-2)\}$. Therefore $|X|+|Y|=(|X^+|+|X^-|+|Y|)+1\le (s-1)+1=s$, and thus the sum of the part sizes of $K$ is at most $s$ as claimed.
\end{proof}

\begin{lemma}\label{induction}
Assume $n=n_H(t+n_G)$ and $d=st+(n_H-1)d_G=td_H+sn_G$. If $\frac{d_G}{n_G}\ge \frac{s}{n_H-1}$, then $\frac{d}{n}\le \frac{n_H-1}{n_H}\cdot \frac{d_G}{n_G}$ holds.
\end{lemma}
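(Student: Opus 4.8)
The plan is to treat the claimed inequality as a purely algebraic consequence of the two defining identities, and to reduce it by a chain of reversible manipulations down to the hypothesis $\frac{d_G}{n_G}\ge \frac{s}{n_H-1}$. Since every quantity involved ($s,t,n_H,n_G,d_G$, and hence $d$ and $n$) is positive (with $t\ge 1$ and $n_H\ge 2$ coming from the construction), multiplying or dividing both sides of an inequality by any of them preserves its direction. Consequently the target inequality will turn out to be \emph{equivalent} to the assumption rather than merely implied by it, which is a convenient sanity check.

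First I would substitute $d=st+(n_H-1)d_G$ and $n=n_H(t+n_G)$ into the left-hand side, so that the goal $\frac{d}{n}\le \frac{n_H-1}{n_H}\cdot\frac{d_G}{n_G}$ becomes
$$\frac{st+(n_H-1)d_G}{n_H(t+n_G)}\le \frac{(n_H-1)d_G}{n_H n_G}.$$
Multiplying through by $n_H$ clears the common factor from both denominators and leaves
$$\frac{st+(n_H-1)d_G}{t+n_G}\le \frac{(n_H-1)d_G}{n_G}.$$

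Next I would cross-multiply by the positive quantities $n_G$ and $t+n_G$ to obtain
$$n_G\bigl(st+(n_H-1)d_G\bigr)\le (n_H-1)d_G\,(t+n_G).$$
Expanding both sides, the common term $n_G(n_H-1)d_G$ appears on each side and cancels, so this is equivalent to $st\,n_G\le (n_H-1)d_G\,t$. Dividing by $t>0$ leaves exactly $s\,n_G\le (n_H-1)d_G$, that is, $\frac{s}{n_H-1}\le \frac{d_G}{n_G}$, which is precisely the hypothesis.

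There is no genuine obstacle here: the content of the lemma is a single cancellation, and the only point requiring a moment's care is confirming that each multiplication and division is by a strictly positive quantity, so that the final inequality is equivalent to — not just a weakening of — the assumed bound. Note also that the second expression $d=td_H+sn_G$ for the regularity plays no role in this computation; only the first expression for $d$ enters.
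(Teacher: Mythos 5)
Your proposal is correct and follows essentially the same route as the paper: substitute the expressions for $d$ and $n$, clear denominators by multiplying through by the positive quantities, cancel the common term $n_G(n_H-1)d_G$, and reduce to $sn_G\le (n_H-1)d_G$, which is the hypothesis. The only cosmetic difference is that you cancel the factor $n_H$ first and then cross-multiply, while the paper cross-multiplies everything at once; both arguments are fully reversible, as you note.
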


\begin{proof}
We need to verify
\[
\frac{st+(n_H-1)d_G}{n_H(t+n_G)}\le \frac{n_H-1}{n_H}\cdot \frac{d_G}{n_G}.
\]
This is equivalent to 
\[
stn_Hn_G+n_H(n_H-1)d_Gn_G\le n_Ht(n_H-1)d_G+n_Hn_G(n_H-1)d_G.
\]
After cancelling terms on both sides and simplifying by $tn_H$, we obtain $sn_G\le (n_H-1)d_G$, which is equivalent to the condition of the lemma.
\end{proof}

\noindent 
Now we are ready to prove Theorem \ref{szubkvad}, which we restate here for convenience.

\begin{reptheorem}{szubkvad}
For any $\varepsilon > 0$ and integer $s \ge 1$ there exists a $d$-regular graph $F$ on $n$ vertices that is $K_{s+2}$-saturated and $\frac{d}{n}< \varepsilon$ holds. Moreover, there exists an infinite sequence $F_m$ of $d_{m}$-regular $K_{s+2}$-saturated graphs on $n_{m}$ vertices such that $\frac{d_{m}}{n_{m}}=O_s(\frac{(\log\log n_m)^2}{\log n_m})$ holds.
\end{reptheorem}

\begin{proof} Observe that the second part of the statement implies the first. We start with proving the first part to avoid unnecessary calculations and then we show how to modify the proof to obtain the second part of the statement.
Fix $\varepsilon>0$ and pick an arbitrary $d'$-regular $K_{s+2}$-saturated graph $G$ on $n'$ vertices. Pick $q$ large enough such that both Lemma \ref{multipartite} and $\frac{s}{2q-1}\le \varepsilon$ hold, and define $m$ to be the smallest integer such that $\frac{d'}{n'}\cdot (\frac{(s+1)q-1}{(s+1)q})^m\le \varepsilon$. Let $F_0=G[E_{(s(q-1))^m}]$, where $E_{(s(q-1))^m}$ is the empty graph on $(s(q-1))^m$ vertices, i.e., $F_0$ is the $(s(q-1))^m$ blow-up of $G$.

We define the following simple process. Assume a $d_i$-regular $K_{s+2}$-saturated graph $F_i$ on $n_i$ vertices is defined. If $\frac{d_i}{n_i}\le \varepsilon$, then $F_i=F$ is the desired graph. Otherwise using the notation of Construction \ref{amalgamation} we set $F_{i+1}=K_{q,q,\dots,q}[s,t,F_i]$ with an appropriately chosen $t$ to obtain a $d_{i+1}$-regular $K_{s+2}$-saturated graph on $n_{i+1}$ vertices, where $K_{q,q,\dots,q}$ has $s+1$ parts. According to Lemma \ref{induction}, if we can find a value $t$, then $\frac{d_{i+1}}{n_{i+1}}\le \frac{(s+1)q-1}{(s+1)q}\frac{d_i}{n_i}\le (\frac{(s+1)q-1}{(s+1)q})^{i+1}\frac{d_0}{n_0}=\frac{d'}{n'}\cdot (\frac{(s+1)q-1}{(s+1)q})^{i+1}$ holds. By definition of $m$, $F_m$ (or even some $F_j$ with $j<m$) will have $\frac{d_m}{n_m}\le \varepsilon$.

All we need to show is that an appropriate $t$ can be picked. Observe that in Construction~\ref{amalgamation} if $G$ and $H$ are fixed and regular, then to obtain $H[s,t,G]$ to be regular again, by Lemma~\ref{calc}, we need $st+(n_H-1)d_G=td_H+sn_G$. Equivalently, we need that $t=\frac{(n_H-1)d_G-sn_G}{d_H-s}$ is an integer. In our case $H$ is the complete $(s+1)$-partite graph $K_{q,q,\dots,q}$, thus $n_H=(s+1)q$ and $d_H=sq$. $G$ is $F_i$, thus $n_G=n_i$ and $d_G=d_i$.

We claim that our sequence $F_i$ of graphs satisfies that for any $i=0,1,\dots,m-1$, the values $d_i$ and $n_i$ are divisible by $(s(q-1))^{m-i}$. This is certainly true for $i=0$ as this is why we blew up $G$ by $(s(q-1))^m$ to obtain $F_0$. Then by induction on $i$, if $(s(q-1))^{m-i}$ divides $d_i,n_i$, then the corresponding $t$ value $\frac{((s+1)q-1)d_i-sn_i}{s(q-1)}$ is divisible by $(q-1)^{m-(i+1)}$. Therefore $d_{i+1}=st+((s+1)q-1)d_i$ and $n_{i+1}=(s+1)q(t+n_i)$ are both divisible by $(s(q-1))^{m-(i+1)}$. This concludes the proof of the first part of the theorem.

Finally, let $\varepsilon:=\frac{1}{q}$, then $m$ can be chosen as $(s+1)q\log q$ since $(\frac{(s+1)q-1}{(s+1)q})^{(s+1)q\log q}\le e^{-\log q}=\frac{1}{q}$. We need to calculate an upper bound on the number of vertices in the above construction. First observe that with $H=K_{q,q,\dots,q}$ we have $t_{i+1}=\frac{(n_H-1)d_i-sn_i}{d_H-s}\le \frac{(n_H-1)n_i-sn_i}{d_H-1}\le 2n_i$ and thus $n_{i+1}=n_H(t_{i+1}+n_i)\le 3n_Hn_i$. So for $F_m$ that contains $n_m$ vertices, we have $n_m \le (3\cdot (s+1)q)^mn_G\cdot (s(q-1))^m\le (3(s+1)q)^{2(s+1)q\log q}=:n$ vertices. As $\frac{(\log\log n)^2}{\log n}\ge \frac{(\log q-2\log\log q)^2}{2(s+1)q\log^2q}\ge \frac{1}{8sq}$ for large enough $q$ and all $s\geq 1$, we have that $\varepsilon=O(\frac{(\log\log n)^2}{\log n})$, thus the result follows. 
\end{proof}

Now we turn our attention to the proof of

\begin{reptheorem}{regreg}~\\
\indent 
(i) $\regregsat(n,K_{t+2})=\Omega(n^{3/2})$ for $t\ge 1$.

(ii) For any $t\ge 1$, we have ${\displaystyle\liminf_{n \rightarrow \infty}} \ \frac{\regregsat(n,K_{t+2})}{n^{3/2}}\le C_t$ for a constant depending only on~$t$.
\end{reptheorem}

\begin{proof}
To see the lower bound of (i), observe that if $n$ is even and $G$ is a $K_{t+2}$-free $d$-regular graph with $1+d^2<n/2$, then for every vertex $v$, there are more than $n/2$ vertices at distance more than 2. Let us define an auxiliary graph $G'$ the following way. Let $V(G')=V(G)$ and $uv$ is an edge in $G'$ if and only if $d_G(u,v)\ge 3$. By Dirac's theorem, $G'$ contains a Hamiltonian cycle. We claim that one can add every other edge of this Hamiltonian cycle (thus a perfect matching) to $G$ to obtain a $(d+1)$-regular $K_{t+2}$-free graph $G^*$. Indeed, as the added edge set is a matching, a triangle can only contain one of its edges. By definition, if $uv$ is such an edge, then $d_G(u,v)\ge 3$. Therefore, $uv$ cannot be contained in a triangle in $G^*$, which implies it cannot be contained in a $K_{t+2}$ in $G^*$. 

If $n$ is odd, we again use the graph $G'$. This time we assume that $1+d^2<cn$ for $c=\frac{1}{3}$. This implies that $G'$ has minimum degree at least $\frac{2}{3}n$. By a theorem of Koml{\'o}s, S{\'a}rk{\"o}zy and Szemer{\'e}di \cite{kssz} (weaker constants were proved earlier in \cite{fh,fk,fgjs}), $G'$ contains the square of a Hamiltonian cycle $C$ if $n$ is large enough. We claim that the graph $G^*$ that we obtain by adding the Hamiltonian cycle $C$ to $G$ does not contain any new triangle. As we added a cycle of length larger than 3, we could not add all three edges of a new triangle. As we added only edges of $G'$, it cannot happen that a triangle with exactly one new edge is created. Finally, adjacent new edges belong to $C$, and for such pairs of edges, the third edge making the triangle complete belongs to $G'$ as well, so this triangle does not exist in $G^*$. 

In proving the upper bound of (ii), our strategy will be to define a regular $K_{t+2}$-free graph $G$ that contains a vertex $v$ such that adding any non-edge of $G$ incident to $v$ would create a $K_{t+2}$. Clearly, such a graph is  $K_{t+2}$-free and adding edges to all vertices would create a copy of $K_{t+2}$.

For fixed $t$ and arbitrary even $d$, we define a $dt$-regular graph $G$ as follows: the induced subgraph of $G$ on the neighborhood $N(v)$ of the special vertex $v$ is the union of $d$ cliques of size $t$. Let these cliques be $A_1,A_2,\dots,A_d$. Let $B_1,B_2,\dots,B_d$ be pairwise disjoint sets of size $dt-t$. Let us join every vertex $u_i\in A_i$ to all vertices in $B_i$. In this way, $v$ and all vertices in $\cup_{i=1}^dA_i$ have degree $dt$ in $G$. Finally, we add on $\cup_{=1}^d B_i$ an arbitrary $(dt-t)$-regular bipartite graph such that the parts are $\cup_{i=1}^{d/2}B_i$ and $\cup_{i=d/2+1}^{d}B_i$. 

By definition, $G$ is $dt$-regular. It does not contain any clique of size $t+2$ as if the clique contains $v$, then $N(v)$ contains cliques only of size at most $t$, while any pair of vertices $b_1,b_2\in \cup_{i=1}^d B_i$ joined by an edge belong to different $B_i$'s therefore they do no share common neighbors. 

As claimed before, any non-edge of $G$ incident to $v$ creates a copy of $K_{t+2}$. Indeed, such an edge has an endpoint $b\in B_i$ for some $i=1,2,\dots,d$ and then $v,b$ and the vertices of $A_i$ form a clique of size $t+2$. The number of vertices in $G$ is $1+(dt)^2$, while the regularity of $G$ is $dt$.

\end{proof}

We finish this section with the proof of our result on the oversaturation number of complete graphs.

\begin{reptheorem}{overreg}
For any $t\ge 1$, we have
${\displaystyle\liminf_{n \rightarrow \infty}} \frac{\overregsat(n,K_{t+2})}{n^{3/2}}=\sqrt{t}/2$.
\end{reptheorem}

\begin{proof}
The lower bound follows from the fact that in a $d$-regular $K_{t+2}$-oversaturated graph $G$ there must be at least $t$ paths of 2 edges from any vertex to any of its non-neighbors, thus $d(d-1)\ge t(n-d-1)$ should hold.

For the upper bound, let us consider first the case $t=1$.  We obtain the following construction with a little alteration of the well-known polarity graph: let $\F$ be the field of size $2^p$. Let us define the equivalence relation $(a,b,c)\sim (x,y,z)$ over all triples of $\F$ by two triples being in relation if there exists a non-zero $u\in \F$ with $au=x,bu=y,cu=z$. Let $G'$ be the graph having the set of not all-zero equivalence classes as vertex set and $(a,b,c)$ adjacent to $(x,y,z)$ if $ax+by+cz=0$. The number of vertices of $G'$ is $2^{2p}+2^p+1$. As the system of linear equations $ax+by+cz=0=a'x+b'y+c'z$ is uniquely solvable for all non-all-zero $(a,b,c)$ and $a',b',c'$, the diameter of $G'$ is 2. All degrees are either $2^p$ or $2^p+1$ depending on whether for the triple $(a,b,c)$ we have $a^2+b^2+c^2=0$ or not. Moreover, it is known %(see e.g. Exercise 19 in Chapter 8 of \cite{IR}) 
that, writing $e$ for the multiplicative unit of $\F$, the set of triples having degree $2^p$ is exactly $S=\{(e,x,e-x):x\in \F\}\cup \{(0,e,e)\}$. Indeed, low degree vertices are those satisfying $x^2+y^2+z^2=0$. As the characteristic of the field is 2, we have $(x+y+z)^2=x^2+y^2+z^2=0$ if and only if $x+y=z$. Therefore there are $(2^p)^2$ solutions one of which is $(0,0,0)$ and the others are the $2^p+1$ equivalence classes of $S$. Moreover, $S$ is the set of neighbors of the triple $(e,e,e)$. Let us define $G$ to be the graph obtained from $G'$ by adding a new vertex $v$ as a twin of $(e,e,e)$, i.e. $(e,e,e)$ and $v$ are not adjacent, and $v$ and $(x,y,z)$ are joined by an edge in $G$ if $(e,e,e)$ and $(x,y,z)$ are joined by an edge in $G'$. By the above, $G$ is $(2^p+1)$-regular on $2^{2p}+2^p+2$ vertices. Clearly, $d_G(v,(e,e,e))=2$ and for $(a,b,c)\neq (e,e,e)$ we have $d_G(v,(a,b,c))=d_{G'}((e,e,e),(a,b,c))$, thus $G$ has also diameter 2 and therefore $K_3$-oversaturated.

For general $t$, let us consider the $t$-blow-up $G_t$ of $G$; i.e., $G_t=G[K_t]$. Observe that $G_t$ is $K_{t+2}$-oversaturated as if $x,y$ form a non-edge in $G_t$, then the vertices $u,v$ of $G$ corresponding to $x,y$ have a common neighbor $w$. Then $x,y$ and the $t$ vertices in $G_t$ corresponding to $w$ form a $K_{t+2}$. Clearly, we have $|V(G_t)|=t|V(G)|$ and $d_{G_t}=td_G+t-1$.
\end{proof}

\section{Other graphs}

\begin{theorem}\label{rizsfelfujt}
Let $F$ be an edge-transitive graph and let $G$ be  regular $F$-saturated. Then for any $t\ge 2$, the graph $G[K_t]$ is regular $F'_t$-oversaturated, where $F'_t$ is obtained from $F$ by removing an edge to get $F'$, then adding back an edge to $F'[K_t]$ to obtain $F'_t$. Moreover, if $F=K_s$ for some $s$, then $G[K_t]$ is $F'_t$-saturated. In particular, for any $t\ge 2$ and $F=K_s$, we have ${\displaystyle\liminf_{n \rightarrow \infty}} \frac{\regsat(n,F'_t)}{n^2}=0$.
\end{theorem}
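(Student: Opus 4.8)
The plan is to establish four things about $H:=G[K_t]$ in turn: that it is regular, that it is $F'_t$-oversaturated for every edge-transitive $F$, that it is additionally $F'_t$-free when $F=K_s$, and finally that feeding sparse graphs from Theorem~\ref{szubkvad} into the construction yields the $\liminf$ statement. Regularity is immediate, since if $G$ is $d$-regular on $N$ vertices then every vertex of $H$ has degree $td+(t-1)$. For oversaturation, I would first observe that every non-edge of $H$ joins the blob of some vertex $u$ to the blob of some vertex $v$ with $uv\notin E(G)$, because inside each blob and between blobs of adjacent vertices $H$ is complete. Fix such a non-edge $e'=ab$, with $a$ in the blob of $u$ and $b$ in the blob of $v$. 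As $G$ is $F$-saturated, $G+uv$ contains a copy of $F$ using $uv$; writing $e=xy$ for the edge deleted to form $F'$ and using edge-transitivity of $F$, I may precompose the embedding with an automorphism of $F$ so that $x\mapsto u$, $y\mapsto v$ and $uv$ plays the role of $e$. Every edge of $F\setminus e$ then maps to a genuine edge of $G$ (only $e$ was sent to the new pair $uv$), so the corresponding blobs are completely joined in $H$; this exhibits a copy of $F'[K_t]$ on those blobs in which the $x$-blob and the $y$-blob are non-adjacent. Adding $e'=ab$ supplies precisely the one missing edge between these two blobs, producing a copy of $F'_t$ containing $e'$, so $H$ is $F'_t$-oversaturated.

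For the case $F=K_s$ I must further show $H$ is $F'_t$-free. Here $F'_t$ consists of $s$ blobs, each a $K_t$: an $(s-2)$-family $M$ of ``middle'' blobs completely joined to everything, together with two ``end'' blobs $A\ni a$ and $B\ni b$ that are completely joined to all of $M$ but joined to each other only by the single surviving edge $ab$. The key structural fact is that $U:=A\cup M$ and $W:=B\cup M$ are each cliques of size $(s-1)t$ in $F'_t$.

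Now suppose, for contradiction, that $H$ contained a copy of $F'_t$. Since $G$ is $K_s$-free it has clique number at most $s-1$, and hence any clique of $H$ of size $(s-1)t$ must occupy exactly $s-1$ blobs of $G$, each used fully, those blobs spanning a $K_{s-1}$ in $G$. Let $P,Q$ be the $(s-1)$-sets of $G$-blobs carrying $U$ and $W$ respectively. An overflow count shows $P\neq Q$: otherwise all $st$ vertices of $U\cup W=V(F'_t)$ would have to fit into the $(s-1)t$ slots of a single family of $s-1$ blobs. Since $|P|=|Q|=s-1$ this forces $|P\cap Q|=s-2$, and the $(s-2)t$ vertices of $M$ exactly fill $P\cap Q$; consequently all of $A$ lands in the single blob $p^*\in P\setminus Q$ and all of $B$ in the single blob $q^*\in Q\setminus P$, with $p^*\neq q^*$. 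The surviving edge $ab$ now joins $p^*$ to $q^*$ across distinct blobs, which forces $p^*q^*\in E(G)$. Then $(P\cap Q)\cup\{p^*,q^*\}$ is a set of $s$ vertices that is a clique in $G$ (the first $s-2$ lie in both $P$ and $Q$, $p^*$ is adjacent to them inside $P$, $q^*$ inside $Q$, and $p^*q^*$ is an edge), i.e.\ a $K_s$ in $G$ --- a contradiction. Together with oversaturation this proves $H$ is $F'_t$-saturated.

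I expect the packing argument of the previous paragraph to be the main obstacle: the delicate point is controlling how the $st$ vertices of $F'_t$ distribute among the blobs of $G$, and arguing that the two cliques $U,W$ are pinned to two $K_{s-1}$'s meeting in exactly $s-2$ blobs whose symmetric-difference vertices, glued by the lone edge $ab$, complete a forbidden $K_s$; the hypothesis $t\ge 2$ enters in guaranteeing that $U$ and $W$ are maximum cliques of $F'_t$ (for $t\ge 2$ one has $(s-1)t\ge (s-2)t+2$), so that the clique structure is as described. For the concluding $\liminf$, I would take, for $s\ge 3$, a $d$-regular $K_s$-saturated graph $G$ on $N$ vertices with $d/N$ arbitrarily small supplied by Theorem~\ref{szubkvad} (applied with parameter $s-2$), the case $s=2$ being the empty graph. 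Then $H=G[K_t]$ is regular $F'_t$-saturated on $n=tN$ vertices with $\frac{|E(H)|}{n^2}=\frac{td+t-1}{2tN}\to 0$, whence $\regsat(n,F'_t)\le |E(H)|=o(n^2)$ along this subsequence and $\liminf_{n\to\infty}\regsat(n,F'_t)/n^2=0$.
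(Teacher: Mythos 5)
Your proof is correct and takes essentially the same route as the paper: regularity of the blow-up is immediate, oversaturation follows by composing the saturating copy of $F$ in $G+uv$ with an automorphism provided by edge-transitivity, and $F'_t$-freeness for $F=K_s$ is proved exactly as in the paper, by forcing the two $(s-1)t$-cliques of $F'_t$ to occupy two families of $s-1$ full blobs meeting in $s-2$ blobs, so that the lone surviving edge would complete a forbidden $K_s$ in $G$. The only differences are cosmetic: you spell out the $\liminf$ deduction from Theorem~\ref{szubkvad} and the degenerate case $s=2$, which the paper leaves implicit (and your side remark that $t\ge 2$ is needed to make $U,W$ maximum cliques is inessential, since the argument never uses maximality).
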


\begin{proof}%[Proof of Theorem \ref{rizsfelfujt}]
Let $F$ be edge-transitive, $G$ be $F$-saturated. Clearly, if $G$ is regular, then so is $G[K_t]$. Let $xy$ be a non-edge of $G[K_t]$. Then $uv$ is a non-edge of $G$, where $u$ and $v$ are the vertices corresponding to $x$ and $y$ in $G$. As $G$ is $F$-saturated there is a copy of $F$ in $G \cup xy$ on vertex set $S \ni x,y$. Then, as $F$ is edge-transitive, $G[K_t]$ spans a graph on the blow-up of $S$ that contains $F'_t$.

Suppose now $F=K_s$ for some $s\ge 3$. We need to show that $G[K_t]$ is $(K_s)'_t$-free. Suppose to the contrary that $G[K_t]$ contains a copy of $(K_s)'_t$. Observe that $(K_s)'_t$ contains two cliques of size $(s-1)t$ intersecting in $(s-2)t$ vertices. We claim that these cliques in $G[K_t]$ must be unions of $s-1$ blow-ups each. Indeed, $(s-1)t$ vertices must meet at least $s-1$ blow-ups and as meeting $s$ blowups would yield that $G$ contains a $K_s$ contrary to our assumption, we obtain that the cliques are indeed unions of $s-1$ blow-ups. The extra edge of $(K_s)'_t$ present in $G[K_t]$ would show that a corresponding edge is present in $G$, again yielding a $K_s$ in $G$. This contradiction proves that $G[K_t]$ is indeed $(K_s)'_t$-free.
\end{proof}

Let $F_6$ denote the graph on vertices $a,b,c,d,e,f$ with edges $ab,ac,bc,ad, bd, be, ce, af,cf$. It is sometimes referred to as the 3-sun graph.

\begin{theorem}\label{3sun}
If $G$ is a regular, $K_3$-saturated graph, then $G[K_2]$ is regular $F$-saturated for any $F_6\subseteq F \subseteq (K_3)'_2$. In particular, ${\displaystyle\liminf_{n \rightarrow \infty}} \frac{\regsat(n,F)}{n^2}=0$ for all such graphs.
\end{theorem}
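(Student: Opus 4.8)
The plan is to prove the two claims in the definition of $F$-saturation separately and to let the two ends of the sandwich handle one claim each. Since $G$ is $K_3$-saturated it is triangle-free, and since it is regular its blow-up $G[K_2]$ is regular as well, of degree $2d_G+1$ on $2|V(G)|$ vertices, where $d_G$ is the degree of $G$. Fix $F$ with $F_6\subseteq F\subseteq (K_3)'_2$. I must show that $G[K_2]$ is $F$-free and that adding any non-edge creates a copy of $F$. The key observation is that $F_6\subseteq F$ makes $F$-freeness follow from $F_6$-freeness, while $F\subseteq (K_3)'_2$ makes it enough, for the edge-addition part, to create a copy of $(K_3)'_2$, since such a copy already contains a copy of $F$.

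The edge-addition half is essentially free. Applying Theorem~\ref{rizsfelfujt} to the edge-transitive clique $K_3$ with $t=2$ shows that $G[K_2]$ is regular and $(K_3)'_2$-saturated; the underlying reason is that a non-edge $xy$ of $G[K_2]$ sits in non-adjacent blobs of vertices $p,q$ of $G$, which by $K_3$-saturation have a common neighbor $r$, and then the three blobs of $p,q,r$ together with the new edge span a $(K_3)'_2$. In particular, $G[K_2]+xy$ contains a copy of $(K_3)'_2$, and therefore a copy of $F$; once $F$-freeness of $G[K_2]$ is established this copy is forced to use $xy$, so $G[K_2]$ is regular $F$-saturated.

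The step I expect to be the main obstacle is proving that $G[K_2]$ is $F_6$-free, which is where triangle-freeness of $G$ is used crucially. First I would record that, because $G$ has no triangle, every triangle of $G[K_2]$ lies inside the $K_4$ spanned by the two blobs of a single edge $uw$ of $G$: three pairwise adjacent blobs would give a triangle in $G$. Hence a putative copy of $F_6$ would have its central triangle inside one such $K_4$ on $\{u_1,u_2,w_1,w_2\}$, with two of its vertices, say $A,B$, in one blob and the third, $C=w_1$, in the other (using the $S_3$-symmetry of the $3$-sun on its central triangle). Now the two pendants attached to the pairs $\{B,C\}$ and $\{A,C\}$ each need a vertex adjacent both to a vertex of the $u$-blob and to $C=w_1$; a short case analysis shows that the only candidates are a vertex of the $u$-blob and $w_2$. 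Since each pendant must differ from the central vertices $A$ and $B$, the $u$-blob option is excluded, and both pendants are forced to equal $w_2$ — impossible, as the two pendants are distinct. This rules out $F_6$, and with it every $F\supseteq F_6$. The delicate points are the confinement of each triangle to one $K_4$ and the verification that the common completions of a cross-edge of that $K_4$ cannot leave it.

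Finally, for the ``in particular'' statement I would feed the construction a sparse base graph. By Theorem~\ref{szubkvad} with $s=1$ there exist, for every $\varepsilon>0$, regular $K_3$-saturated graphs $G$ with $d_G/|V(G)|<\varepsilon$; taking $\varepsilon\to0$ produces a sequence with $d_G/|V(G)|\to0$. For each such $G$ the graph $G[K_2]$ is regular $F$-saturated on $N=2|V(G)|$ vertices with $N(2d_G+1)/2=o(N^2)$ edges, so $\regsat(N,F)=o(N^2)$ along this sequence, and hence $\liminf_{n\to\infty}\regsat(n,F)/n^2=0$.
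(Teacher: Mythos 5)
Your proposal is correct, and its skeleton matches the paper's: regularity is immediate, the edge-addition half is delegated to Theorem~\ref{rizsfelfujt} (with the same remark that a new copy of $(K_3)'_2$ contains a copy of every $F\subseteq (K_3)'_2$, which must use the new edge once $F$-freeness is known), the sparse base graphs come from Theorem~\ref{szubkvad} with $s=1$, and both freeness arguments hinge on the key fact that triangle-freeness of $G$ confines every triangle of $G[K_2]$ to the $K_4$ spanned by the two blobs of a single edge of $G$. Where you genuinely diverge is in how $F_6$-freeness is derived from that fact. The paper works with the three \emph{pendant} triangles $abd$, $bce$, $acf$ of the 3-sun: they are pairwise edge-disjoint, pairwise vertex-intersecting, and have no common vertex, and a case analysis on which original edges of $G$ host three such triangles (all the same edge, two disjoint edges, or edges meeting pairwise, which in a triangle-free graph forces a star) shows this intersection pattern cannot occur in $G[K_2]$. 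You instead pin down the \emph{central} triangle inside one $K_4$ on $\{u_1,u_2,w_1,w_2\}$ and exploit common neighborhoods: each pendant attached to a pair containing $C=w_1$ must be a common neighbor of a cross pair $\{u_i,w_1\}$, and triangle-freeness of $G$ forces all such common neighbors to stay inside the $K_4$, so after excluding the central vertices both pendants collapse onto $w_2$, a contradiction. Your route is somewhat more economical — it uses only two of the three pendants and avoids bookkeeping over the possible configurations of three host edges — while the paper's isolates a clean structural property of $F_6$ (the intersection pattern of its pendant triangles) that drives its case analysis. You also make explicit two points the paper leaves implicit: the derivation of the ``in particular'' clause from Theorem~\ref{szubkvad}, and the observation that the copy of $F$ inside the created $(K_3)'_2$ necessarily contains the added edge.
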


\begin{proof}
Observe that the triangles $abd,bce,acf$ in $F_6$ are pairwise edge-disjoint, pairwise vertex-intersecting with no vertex contained in all three of them. As $G$ is $K_3$-free, all triangles in $G[K_2]$ belong to an original edge. If three triangles of $G[K_2]$ belong to the same original edge, then they are contained in a $K_4$, so clearly cannot have the above property. If two of the triangles belong to disjoint original edges, then the triangles are disjoint. As $G$ is triangle-free, either two of the triangles must come from the same original edge and the third from an adjacent edge or all three triangles must come from three original edges sharing the same vertex of $G$. In both cases, one cannot have that the three triangles pairwise intersect, but they do not share a common vertex. Therefore $G[K_2]$ is indeed $F_6$-free.

On the other hand, as $G$ is $K_3$-saturated, adding any edge to $G[K_2]$ would create a copy of $(K_3)'_2$ by Theorem \ref{rizsfelfujt}.
\end{proof}

\section{Concluding remarks}

In this paper we studied regular saturation, but we are left with more questions than answers. The main question left wide open is the following. For which graphs do we have that $\regsat(n,F)$ exists for every $n$ large enough? We could show the existence for the triangle and some other graphs, but we do not know the answer even for the most natural graph classes such as paths, cycles, cliques. To motivate some further research let us put here some observations on the existence of regular $K_4$-saturated graphs for certain infinite sequences of $n$.

\begin{proposition}\label{k4}
There exist regular $K_4$-saturated graphs of order $n$ for the following residue classes:
\begin{itemize}
    %\item[$(i)$] $n\equiv 1$~{\rm (mod 6)},
    \item[$(i)$] $n\equiv 6$~{\rm (mod 8)},
    \item[$(ii)$]
     $n\equiv 0$~{\rm (mod 8)},
    \item[$(iii)$] $n\equiv 0$~{\rm (mod 17)}.
\end{itemize}
\end{proposition}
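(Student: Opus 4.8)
The plan is to exhibit, for each residue class, an explicit regular $K_4$-saturated construction, verifying in each case both the $K_4$-freeness and the saturation property. Since we are free to choose the regularity $d$, I would aim for small $d$ (likely $d=3$ or $d=4$) so that $K_4$-freeness is cheap to check, and then focus the work on saturation, which is the binding constraint. Recall that saturation requires that every non-edge $xy$ lies in a potential triangle-plus structure: adding $xy$ must complete a $K_4$, which means $x$ and $y$ must have two common neighbors that are themselves adjacent. This is a much stronger local requirement than mere $K_3$-saturation, so the degree cannot be too small: each vertex must see enough triangles so that every non-neighbor shares an edge inside its neighborhood.

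For the cases $n\equiv 0,6 \pmod 8$ I would look for a vertex-transitive or circulant construction on $\mathbb{Z}_n$, choosing a connection set $A_n\subseteq \mathbb{Z}_n\setminus\{0\}$ (symmetric, $A_n=-A_n$) exactly as in the proof of \tref{K3odd}. The two residue classes modulo $8$ suggest that $d$ will be $4$ (so $8\mid dn$ is automatic since $n$ is even and $d$ even), with a carefully chosen $A_n$ of size $4$: the idea is that the neighborhood of each vertex, pulled back to $\mathbb{Z}_n$, must contain an edge (a pair of connection differences whose difference is again a connection difference) so that triangles exist, while globally no $K_4$ appears. Concretely I would try small sets like $A_n=\{\pm a,\pm b\}$ and solve the combinatorial conditions: $K_4$-freeness forbids four differences summing pairwise into $A_n$, while saturation demands that every residue $c\notin A_n\cup\{0\}$ can be written so that $c$ together with two elements of $A_n$ closes a triangle with an edge inside the common neighborhood. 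The modulus $8$ presumably falls out of divisibility constraints needed to make these difference equations solvable; I would split $n\equiv 6$ and $n\equiv 0$ because the parity/divisibility of the required differences behaves differently.

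For case $(iii)$, $n\equiv 0\pmod{17}$, the appearance of the specific modulus $17$ strongly suggests an algebraic or design-theoretic construction rather than a generic circulant: $17$ is prime and $17=2^4+1$, so I expect either a Cayley graph on $\mathbb{Z}_{17}$ (or on $\mathbb{Z}_n$ using a block of size $17$) built from quadratic residues, or a blow-up / lexicographic product using a small base gadget of order $17$ such as the Paley-type graph, which has strong regularity and controlled clique structure. The natural approach is to find one regular $K_4$-saturated graph $H$ on $17$ vertices (by an explicit search over a symmetric connection set in $\mathbb{Z}_{17}$), and then take disjoint unions of copies of $H$ to cover all $n\equiv 0\pmod{17}$; disjoint union preserves regularity, $K_4$-freeness, and—crucially—saturation is inherited since any non-edge is either within one copy (covered by $H$'s saturation) or between copies, and one would need $H$ to be dense enough that a cross-copy edge also completes a $K_4$, which typically forces instead that we only claim the single modulus via one component rather than a union. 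I would therefore hedge: establish one $17$-vertex regular $K_4$-saturated graph and, if disjoint union fails saturation across components, use the blow-up machinery of \tref{rizsfelfujt} or a direct product to glue copies while preserving saturation.

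The main obstacle will be the saturation verification in each circulant case, specifically proving that \emph{every} non-edge completes a $K_4$: this is a universally quantified statement over all difference classes, and unlike $K_3$-saturation (where one only needs a common neighbor) it requires an \emph{adjacent} pair of common neighbors, so the bookkeeping is substantially heavier. I expect the bulk of the proof to be a finite case check per residue class—solvable but tedious—where the real content is choosing the connection sets $A_n$ so that the triangle-completion equations have solutions for every non-edge while the $K_4$-forbidding equations have none. As in \tref{K3odd}, I would state the connection sets explicitly and leave the routine verification of $K_4$-freeness and saturation to the reader for each residue class.
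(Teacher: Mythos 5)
Your proposal is built on a premise that cannot work: you fix the regularity at $d=3$ or $d=4$ so that ``$K_4$-freeness is cheap to check,'' but constant-degree regular $K_4$-saturated graphs do not exist once $n$ is large. In a $K_4$-saturated graph every non-edge $xy$ must acquire two \emph{adjacent} common neighbors, so every pair of non-adjacent vertices already has a common neighbor and the graph has diameter $2$; a $d$-regular graph of diameter $2$ has at most $d^2+1$ vertices. This is exactly the paper's Proposition~\ref{lower}(ii) applied to $F=K_4$ (where $K_4\setminus e$ has diameter $2$), which forces $\regsat(n,K_4)=\Omega(n^{3/2})$, i.e.\ degree $\Omega(\sqrt n)$. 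Hence your circulants with connection sets $\{\pm a,\pm b\}$ can never cover a full residue class mod $8$, and there is no low-degree $17$-vertex gadget for case (iii) either (a $4$-regular diameter-$2$ graph on $17$ vertices would be a Moore graph, hence triangle-free, hence not $K_4$-saturated). Your fallback for (iii) collapses as well: disjoint unions are never $K_4$-saturated, as you yourself observe, and neither proposed repair helps --- Theorem~\ref{rizsfelfujt} concerns saturation for the modified graph $F'_t$, not for $K_4$, and a blow-up $G[K_t]$ of a $K_4$-saturated $G$ is not even $K_4$-free, since such a $G$ necessarily contains triangles.

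The missing idea is the join construction, Claim~\ref{join}: if $G$ is $d_G$-regular $K_3$-saturated and $H$ is $d_H$-regular $K_2$-saturated (i.e.\ an independent set, $d_H=0$), then the join $G+H$ is $K_4$-saturated, and it is regular precisely when $d_G+|V(H)|=d_H+|V(G)|$. The paper takes $G=C_5[E_k]$ ($2k$-regular on $5k$ vertices) joined with $E_{3k}$, giving a $5k$-regular $K_4$-saturated graph on $8k$ vertices for case (ii), and $G=P[E_k]$ with $P$ the Petersen graph ($3k$-regular on $10k$ vertices) joined with $E_{7k}$, giving a $10k$-regular graph on $17k$ vertices for case (iii). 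So the moduli $8$ and $17$ are not divisibility artifacts of difference equations, as you guessed; they are simply $5k+3k$ and $10k+7k$, the vertex counts forced by the degree-balancing condition of the join. Only case (i) uses a circulant, and there the connection set $\{1,2,5,6,\dots,4k+1,4k+2\}$ on $n=8k+6$ vertices is \emph{dense}, with degree comparable to $n/2$, consistent with the $\Omega(\sqrt n)$ lower bound on the degree.
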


\emph{Sketch of proof.}~\\
\indent 
$\bullet$ To prove (i) for $n=8k+6$ let us define a graph $G_n$ on the vertex set $V(G_n)=\{v_0,v_1,\dots,v_{n-1}\}$ and edge set $E(G_n)=\{(v_i,v_j):i-j\equiv a ~\text{for some}\ a\in \{1, 2, 5, 6, ..., 4k+ 1, 4k+ 2 \}\}$. One can easily see that this construction works.

\medskip 

$\bullet$ To prove $(ii)$ and $(iii)$ we use the following claim the verification of which is left to the reader.

\begin{claim}\label{join}
If $G$ is $d_G$-regular $K_s$-saturated and $H$ is $d_H$-regular $K_t$-saturated, then the join $G+H$ is $K_{s+t-1}$-saturated and if $d_H+v_G=d_G+v_h=:d$, then $G+H$ is $d$-regular.
\end{claim}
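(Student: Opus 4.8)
The plan is to verify the two assertions separately: first the (easy) regularity statement, then the saturation statement, for which the key point is that cliques in a join split cleanly across the two sides.

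For regularity I would simply compute degrees in the join $G+H$. Since every vertex of $G$ is joined to all of $V(H)$, a vertex $u\in V(G)$ has degree $d_G+v_H$, and symmetrically a vertex $w\in V(H)$ has degree $d_H+v_G$, where $v_G,v_H$ denote the orders of $G,H$. Thus $G+H$ is regular exactly when these two numbers agree, i.e.\ when $d_G+v_H=d_H+v_G$, and then the common degree is the claimed value $d$.

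For the saturation statement the central observation is that a set $A\cup B$ with $A\subseteq V(G)$ and $B\subseteq V(H)$ induces a clique in $G+H$ if and only if $A$ is a clique of $G$ and $B$ is a clique of $H$, because all $A$–$B$ edges are present by definition of the join. Consequently the clique number satisfies $\omega(G+H)=\omega(G)+\omega(H)$. Since $G$ is $K_s$-free and $H$ is $K_t$-free, we have $\omega(G)\le s-1$ and $\omega(H)\le t-1$, whence $\omega(G+H)\le s+t-2$ and so $G+H$ is $K_{s+t-1}$-free. It then remains to show that adding any non-edge creates a $K_{s+t-1}$. Because all cross edges are present, every non-edge lies inside $V(G)$ or inside $V(H)$; by symmetry I treat a non-edge $xy$ with $x,y\in V(G)$. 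As $G$ is $K_s$-saturated and $K_s$-free, $G+xy$ contains a copy of $K_s$ that must use the new edge and hence contains both $x$ and $y$; call its vertex set $A$. I would then combine $A$ with any $(t-1)$-clique $B$ of $H$: since every vertex of $A$ is adjacent to every vertex of $B$ in the join, $A\cup B$ induces a clique of size $s+(t-1)=s+t-1$ that uses the added edge. The symmetric argument handles a non-edge inside $V(H)$, this time using an $(s-1)$-clique of $G$.

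The main obstacle — really the only point requiring care — is guaranteeing that $H$ contains a $K_{t-1}$ (and dually that $G$ contains a $K_{s-1}$). This holds whenever $H$ has at least one non-edge $wz$: adding $wz$ creates a $K_t$, and deleting $z$ from it leaves a $K_{t-1}\subseteq H$; the case where $H$ is complete on at least $t-1$ vertices is immediate. Hence the argument goes through outside the degenerate situation of a complete factor on fewer than $t-1$ vertices, which one should either exclude or note does not arise for the orders relevant to Proposition~\ref{k4}.
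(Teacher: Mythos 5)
Your proof is correct and is precisely the verification the paper intends: the paper gives no argument for Claim~\ref{join} (it is explicitly ``left to the reader''), and your degree count plus the observation that cliques and non-edges of a join split cleanly between the two sides is the standard route. Your closing caveat is a genuine, if minor, catch: as literally stated the claim fails for vacuously saturated complete factors on fewer than $t-1$ (resp.\ $s-1$) vertices --- e.g.\ $H=K_1$ is $K_3$-saturated, yet $G+K_1$ is not $K_{s+2}$-saturated since a non-edge of $G$ only yields a $K_{s+1}$ --- but in the paper's applications ($t=2$ with $H$ an empty graph, and $G$ a blow-up of $C_5$ or of the Petersen graph) this degeneracy does not arise, so Proposition~\ref{k4} is unaffected.
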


The proof of (ii) follows from Claim \ref{join} applied to $G=C_5[E_k]$ and $H=E_{3k}$ with $s=3,t=2$.

The proof of (iii) follows from Claim \ref{join} applied to $G=P[E_k]$ and $H=E_{7k}$ with $s=3,t=2$, where $P$ is the Petersen-graph.
\qed

\bigskip

Let us consider what happens if we ask weaker questions than whether there exists a regular $F$-saturated graph for every large enough $n$:

\medskip 

For which graphs does $\regsat(n,F)$ exist for infinitely many values of $n$? For trees and cliques this existence is obvious, while it is not hard to see that for a matching $M_k$ of $k$ edges, $\regsat(n,F)$ does not exists if $n>4k-4$. Indeed, in an $M_k$-free graph $G$, $2k-2$ vertices cover all the edges, thus the sum of their degrees is at least $|E(G)|$, which is at least the sum of the degrees of the other vertices.

\smallskip 

For which graphs does $\regsat(n,F)$ exist for some $n$? 
Obviously, if $n<|V(F)|$, then $K_n$ is regular and $F$-saturated, thus the meaningful question assumes $n\ge |V(F)|$. Even that does not hold for every graph, although the only counterexample we are aware of is the matching $M_2$.

\bigskip 

Another variant is if we lessen our requirements on regularity:

\smallskip 

Let us call a graph \textit{almost regular} if every degree is $d$ or $d+1$ for some $d$. Then it is immediate that there exists an almost regular $F$-saturated $n$-vertex graph for every $n$ if $F$ is a tree or clique. However, the matching $M_k$ is again an example where $n$ needs to be small for the existence of $G$.
Let us call a graph $(d_1,d_2)$-biregular if every degree is $d_1$ or $d_2$. Maybe this is the weakening of the conditions that is enough to ensure the existence for every $n$. This is the case at least for $M_k$, as there is an $M_k$-saturated graph for $n\ge 2k$ with $k-1$ vertices of degree $n-1$ and $n-k+1$ vertices of degree $k-1$.

\section*{Acknowledgement}

Research was supported by the National Research, Development and Innovation Office - NKFIH under the grants FK 132060, KH130371, KKP-133819 and SNN 129364. Research of  Vizer was supported by the J\'anos Bolyai Research Fellowship. and by the New National Excellence Program under the grant number \'UNKP-20-5-BME-45.
Patk\'os acknowledges the financial support from the Ministry of Educational and Science of the Russian Federation in the framework of MegaGrant no. 075-15-2019-1926.

\end{document}